\DeclareMathAlphabet{\mathpzc}{OT1}{pzc}{m}{it} 
\newtheorem{Th}{Theorem}[section]              
\newtheorem{Prop}{Proposition}[section]
\newtheorem{Lem}{Lemma}[section]
\newcommand{\B}{\mathbb{B}}
\title[UMD spaces and imaginary powers of Hermite and Laguerre operators]
{Characterization of UMD Banach spaces by imaginary powers of Hermite and Laguerre operators}
\author[J. Betancor]{Jorge J. Betancor}
\address{Jorge J. Betancor, Alejandro J. Castro, and Lourdes Rodríguez-Mesa \newline
Departamento de An\'{a}lisis Matem\'{a}tico\\
Universidad de la Laguna\\
Campus de Anchieta, Avda. Astrof\'{\i}sico Francisco S\'{a}nchez, s/n\\
38271 La Laguna (Sta. Cruz de Tenerife), Spain}
\email{jbetanco@ull.es, ajcastro@ull.es, lrguez@ull.es}
\author[A.J. Castro]{Alejandro J. Castro}
\author[J. Curbelo]{Jezabel Curbelo}
\address{Jezabel Curbelo\newline
Instituto de Ciencias Matem\'aticas (CSIC-UAM-UC3M-UCM)\\
Consejo Superior de Investigaciones Cient\'{\i}ficas\\
Nicol\'as Cabrera 15\\
28049 Madrid, Spain} \email{jezabel.curbelo@icmat.es}
\author[L. Rodr\'{\i}guez-Mesa]{Lourdes Rodr\'{\i}guez-Mesa}
\subjclass[2000]{42C05 (primary), 42C15 (secondary)}
\keywords{Laguerre operator, Laplace transform type multipliers, UMD
spaces, imaginary powers}
\thanks{This paper is partially supported by MTM2010/17974. The second author is also supported by a FPU grant from the Government of
Spain and the third one by a grant JAE-Predoc of the CSIC}
\begin{document}

\maketitle

\begin{abstract}
In this paper we characterize the Banach spaces with the UMD property by means of $L^p$-boundedness properties for the imaginary powers of the Hermite and Laguerre operators. In order to do this we need to obtain pointwise representations for the Laplace transform type multipliers associated with Hermite and Laguerre operators.
\end{abstract}

\section{Introduction}

 A Banach space $\B$ is said UMD when, for some (equivalently, for any) $1<p<\infty$ the $\B$-valued Hilbert transform defined on $L^p(\mathbb{R})\otimes \B$ can be extended as a bounded operator on the Bochner-Lebesgue space $L^p_\B(\mathbb{R})$ (see \cite{Bour} and \cite{Burk}).

Characterizations of the UMD spaces involving $L^p $-boundedness properties for singular integral operators or $g$-functions have been established by several authors (\cite{AT}, \cite{Guer}, \cite{HTV}, \cite{Hyto1}, \cite{Hyto2} and \cite{Xu}). More concretely, UMD Banach spaces are described by means of the $L^p$-boundedness of the imaginary powers of Laplacian in \cite{Guer}. Our objective in this paper is to characterize the UMD Banach spaces as those Banach spaces $\B$ for which the imaginary powers $\mathcal{L}^{i\gamma}$, $\gamma\in \mathbb{R}$, of $\mathcal{L}$ can be extended to $L^p_\B(\Omega,\mu)$ as bounded operators on $L^p_\B(\Omega,\mu)$, $1<p<\infty$, when $\mathcal{L}$ represent the Hermite or Laguerre operators and $(\Omega,\mu)$ the associated measure space.

Suppose that $\Omega\subset \mathbb{R}^n$, $\mu$ is a positive measure on $\Omega$, $\widetilde{\mathcal{L}}$ is a second order linear differential operator defined on $C^2(\Omega)$ and, for every $k\in \mathbb{N}$, $\varphi_k\in L^2(\Omega,\mu)$ is an eigenfunction of $\widetilde{\mathcal{L}}$ associated with $\nu_k\in (0,\infty)$, that is, $\widetilde{\mathcal{L}}\varphi_k=\nu_k\varphi_k$. Assume also that $\{\nu_k\}_{k\in \mathbb{N}}\uparrow \infty$ and that $\{\varphi_k\}_{k\in \mathbb{N}}$ is an orthonormal basis in $L^2(\Omega,\mu)$. We define the operator $\mathcal{L}$ as follows
\begin{equation}\label{A1.1}
\mathcal{L}(f)=\sum_{k=0}^\infty \nu_k c_k(f)\varphi_k,\,\,\,f\in D(\mathcal{L}),
\end{equation}
where, for every $k\in \mathbb{N}$ and $f\in L^2(\Omega,\mu)$, $c_k(f)=\int_\Omega f(x)\varphi_k(x)d\mu(x)$ and
$$
D(\mathcal{L})=\{f\in L^2(\Omega,\mu):\,\sum_{k=0}^\infty \nu_k^2|c_k(f)|^2<\infty\}.
$$
$\mathcal{L}$ is a self-adjoint and positive operator.

If $m:\{\nu_k\}_{k\in \mathbb{N}}\longrightarrow \mathbb{C}$ is bounded we define the spectral multiplier $T_m^\mathcal{L}$  for the operator $\mathcal{L}$ by
$$
T_m^\mathcal{L}(f)=\sum_{k=0}^\infty m(\nu_k)c_k(f)\varphi_k,\,\,\,f\in L^2(\Omega,\mu).
$$
It is clear that $T_m^\mathcal{L}$ is a bounded operator from $L^2(\Omega,\mu)$ into itself.

The semigroup of operators generated by $-\mathcal{L}$ in $L^2(\Omega,\mu)$ is $\{W_t^\mathcal{L}\}_{t>0}$, where, for every $t>0$, $W_t^\mathcal{L}$ is the spectral multiplier defined by
\begin{equation}\label{A2}
W_t^\mathcal{L}(f)=\sum_{k=0}^\infty e^{-t\nu_k}c_k(f)\varphi_k,\,\,\,f\in L^2(\Omega,\mu).
\end{equation}

Following \cite{Stei} we consider the Laplace transform type spectral multipliers associated to the operator $\mathcal{L}$. We say that a continuous function $m$ on $(0,\infty)$ is of Laplace transform type when it is given by
\begin{equation}\label{m}
m(\lambda )=\lambda \int_0^\infty e^{-\lambda t}\phi (t)dt,\quad \lambda \in (0,\infty ),
\end{equation}
where $\phi \in L^\infty (0,\infty )$. Note that $m$ is also a bounded measurable function on $(0,\infty )$. Then, the so called Laplace transform type spectral multiplier $T_m^\mathcal{L}$ is bounded in $L^2(\Omega,\mu)$. In \cite[Corollary 3, p. 121]{Stei} it was established that if $\{W_t^\mathcal{L}\}_{t>0}$ is a symmetric diffusion semigroup (\cite[p. 65]{Stei}) the Laplace transform type spectral multiplier $T_m^\mathcal{L}$ is bounded from $L^p(\Omega,\mu)$ into itself, for every $1<p<\infty$. Many authors have analyzed $L^p$-boundedness properties for Laplace transform type spectral multipliers in different settings (see, for example, \cite{BCC}, \cite{BMaR}, \cite{DDD}, \cite{GCMST}, \cite{GIT}, \cite{Mart}, \cite{Sass}, \cite{Stei}, \cite{Szar} and \cite{Wr}).

A remarkable particular case of Laplace transform type spectral multiplier is the imaginary power $\mathcal{L}^{i\gamma }$, $\gamma \in \mathbb{R}$, defined as usual by $\mathcal{L}^{i\gamma}=T_{m_\gamma }^\mathcal{L}$, where, for every $\gamma \in \mathbb{R}$, $m_\gamma$ represents the function of Laplace transform type given by \eqref{m}, with $\phi(t)=\phi _\gamma (t)=(\Gamma (1-i\gamma ))^{-1}t^{-i\gamma }$, $t\in (0,\infty )$.

In this paper we consider two differential operators:

$\bullet$ The Hermite (harmonic oscillator) operator $\widetilde{H}=-\frac{1}{2}\Big(\frac{d^2}{dx^2} -x^2\Big)$, on $(\mathbb{R},dx)$.

$\bullet$ The Laguerre operator $\widetilde{L}_\alpha=-\frac{1}{2}\left(\frac{d^2}{dx^2}-x^2-\frac{\alpha ^2-1/4}{x^2}\right)$, $\alpha>-1/2$, on $((0,\infty),dx)$.

To simplify the calculations we consider the
Hermite operator on $\mathbb{R}$. The same results can be
established in higher dimensions.

In the sequel we represent by $\mathcal{L}$ (respectively, $(\Omega,\mu)$) one of the operators $H$ or $L_\alpha$, $\alpha>-1/2$,
defined by \eqref{A1.1}(respectively, $(\mathbb{R},dx)$ or $((0,\infty),dx)$). There exists a $C^\infty((0,\infty)\times\Omega\times\Omega)$-function
$$
(t,x,y)\in (0,\infty)\times\Omega\times\Omega \longmapsto W_t^\mathcal{L}(x,y)\in \mathbb{R},
$$
such that, for every $f\in L^2(\Omega,\mu)$,
\begin{equation}\label{rep}
W_t^\mathcal{L}(f)(x)=\int_\Omega W_t^\mathcal{L}(x,y)f(y)d\mu(y),\,\,\,x\in \Omega\,\,and\,\,t>0.
\end{equation}
Moreover, for every $t>0$, $W_t^\mathcal{L}$ can be extended by \eqref{rep} to $L^p(\Omega,\mu)$ as a bounded operator from $L^p(\Omega,\mu)$ into itself, $1\le p\le\infty$.

In order to establish our characterization of the UMD Banach spaces we need to prove pointwise representations for the Laplace transform type spectral multipliers as principal value integral operators.

\begin{Th}\label{Principalvalue}
Let $m$ be a function of Laplace transform type and $\phi \in L^\infty (0,\infty )$ connected with $m$ by \eqref{m}. Then, there exists $\Lambda\in L^\infty(0,\infty)$ such that, for every $f\in L^2(\Omega,\mu)$,
\begin{equation}\label{Tmexpression}
T_m^\mathcal{L} (f)(x)=\lim_{\varepsilon\rightarrow 0^+}\left(\Lambda (\varepsilon )f(x)+\int_{y\in \Omega,\,|x-y|>\varepsilon}K_\phi ^\mathcal{L} (x,y)f(y)d\mu(y)\right),\quad (\mu)-\mbox{a.e. } x\in \Omega,
\end{equation}
where
\begin{equation*}\label{Kalpha}
K_\phi ^\mathcal{L} (x,y)=\int_0^\infty \phi (t)\Big(-\frac{\partial}{\partial t}\Big)W_t^\mathcal{L} (x,y)dt,\quad x,y\in \Omega.
\end{equation*}
Moreover, the limit in \eqref{Tmexpression} exists for every $f\in L^p(\Omega,\mu)$, $1\le p<\infty$, and $T_m^\mathcal{L}$ can be extended by \eqref{Tmexpression} to $L^p(\Omega,\mu)$ as a bounded operator from $L^p(\Omega,\mu)$ into itself, when $1<p<\infty$, and from $L^1(\Omega,\mu)$ into $L^{1,\infty }(\Omega,\mu)$.

If there exists the limit $\lim_{t\to 0^+}\phi(t)=\phi(0^+)$, then
\begin{equation}\label{Tmexpression2}
T_m^\mathcal{L} (f)(x)=\phi(0^+)f(x)+\lim_{\varepsilon \rightarrow 0^+}\int_{y\in \Omega,\,|x-y|>\varepsilon}K_\phi^\mathcal{L} (x,y)f(y)d\mu(y),\quad (\mu)-\mbox{a.e. }x\in \Omega,
\end{equation}
for every $f\in L^p(\Omega, \mu)$, $1\leq p<\infty$.
\end{Th}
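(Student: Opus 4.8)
The plan is to reduce everything to the semigroup $\{W_t^\mathcal{L}\}_{t>0}$ and exploit the elementary identity $\big(-\partial_t\big)W_t^\mathcal{L}(f)=\sum_k\nu_k e^{-t\nu_k}c_k(f)\varphi_k=\mathcal{L}W_t^\mathcal{L}(f)$. Inserting \eqref{m} and formally interchanging the spectral sum with the $t$-integral suggests $T_m^\mathcal{L}(f)=\int_0^\infty\phi(t)\big(-\partial_t\big)W_t^\mathcal{L}(f)\,dt$. To make this rigorous I would first work with truncated multipliers $\lambda\mapsto\int_\delta^{1/\delta}\phi(t)\lambda e^{-\lambda t}\,dt$: since $\phi\in L^\infty(0,\infty)$ and $\int_0^\infty\nu_k e^{-t\nu_k}\,dt=1$ for every $k$, these are uniformly bounded by $\|\phi\|_\infty$ and converge pointwise to $m(\nu_k)$, so by the spectral theorem and dominated convergence in the eigenfunction expansion, $\int_\delta^{1/\delta}\phi(t)\big(-\partial_t\big)W_t^\mathcal{L}(f)\,dt\to T_m^\mathcal{L}(f)$ in $L^2(\Omega,\mu)$ as $\delta\to0^+$. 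This anchors the formal identity in the $L^2$ sense.

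For each fixed $\delta>0$ the map $(t,x,y)\mapsto\big(-\partial_t\big)W_t^\mathcal{L}(x,y)$ is smooth and integrable on $[\delta,1/\delta]\times\Omega$, so Fubini rewrites the truncated operator as integration against $\int_\delta^{1/\delta}\phi(t)\big(-\partial_t\big)W_t^\mathcal{L}(x,y)\,dt$. The technical heart of the proof is to show that $K_\phi^\mathcal{L}(x,y)=\int_0^\infty\phi(t)\big(-\partial_t\big)W_t^\mathcal{L}(x,y)\,dt$ converges for $x\neq y$ and is a Calderón--Zygmund kernel on the space of homogeneous type $(\Omega,|\cdot|,dx)$: a size bound $|K_\phi^\mathcal{L}(x,y)|\lesssim 1/|x-y|$ and a Hörmander-type smoothness estimate in both variables. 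This is precisely where the structure of $\mathcal{L}$ enters, through the Mehler kernel for $H$ and the Hille--Hardy formula for $L_\alpha$: I would derive Gaussian-type bounds on $W_t^\mathcal{L}(x,y)$ and on its $t$- and $x$-derivatives, and then estimate the $t$-integral by splitting at $t\sim|x-y|^2$. Carrying these estimates out uniformly in the relevant ranges of $x,y$, and separately for each operator, is what I expect to be the main obstacle.

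Next I would pass to the principal value and isolate $\Lambda(\varepsilon)$. Splitting $\int_\Omega=\int_{|x-y|>\varepsilon}+\int_{|x-y|\le\varepsilon}$ and applying Fubini on the exterior region (legitimate by the size estimate) yields the guiding relation
\[
\int_{|x-y|>\varepsilon}K_\phi^\mathcal{L}(x,y)f(y)\,d\mu(y)=T_m^\mathcal{L}(f)(x)-\int_0^\infty\phi(t)\int_{|x-y|\le\varepsilon}\big(-\partial_t\big)W_t^\mathcal{L}(x,y)f(y)\,d\mu(y)\,dt,
\]
understood in the limiting sense justified above. It then suffices to choose $\Lambda(\varepsilon)$ so that $\Lambda(\varepsilon)f(x)$ cancels the diagonal remainder as $\varepsilon\to0^+$. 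I would define $\Lambda(\varepsilon)=\int_0^\infty\phi(t)\theta_\varepsilon(t)\,dt$, where $\theta_\varepsilon(t)$ is the $x$-independent principal part of the local mass $\int_{|x-y|\le\varepsilon}\big(-\partial_t\big)W_t^\mathcal{L}(x,y)\,d\mu(y)$; since $\theta_\varepsilon\ge0$ integrates to a quantity bounded uniformly in $\varepsilon$ (the local heat content of the identity), one gets $\Lambda\in L^\infty(0,\infty)$. Writing $f(y)=(f(y)-f(x))+f(x)$ splits the remainder into a term absorbed by $\Lambda(\varepsilon)f(x)$ (using the $x$-independence of the principal part and the Gaussian concentration of $\big(-\partial_t\big)W_t^\mathcal{L}$) and a term carrying $f(y)-f(x)$ that vanishes by the kernel estimates together with a Lebesgue-differentiation argument, first for smooth compactly supported $f$ and then by density. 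When $\lim_{t\to0^+}\phi(t)=\phi(0^+)$ exists, the concentration of $\theta_\varepsilon$ near $t=0$ as $\varepsilon\to0^+$ forces $\Lambda(\varepsilon)\to\phi(0^+)$, which gives \eqref{Tmexpression2}.

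Finally, the size and smoothness estimates of $K_\phi^\mathcal{L}$ combined with the trivial $L^2$-boundedness of $T_m^\mathcal{L}$ (immediate since $m$ is bounded) place the operator within Calderón--Zygmund theory on $(\Omega,|\cdot|,dx)$, yielding boundedness on $L^p(\Omega,\mu)$ for $1<p<\infty$ and the weak type $(1,1)$ estimate. This justifies extending \eqref{Tmexpression} and \eqref{Tmexpression2} from $L^2$ to $L^p$, $1\le p<\infty$, by density; the existence of the principal-value limit for each individual $f\in L^p$ follows by controlling the associated maximal truncated operator (again via the kernel bounds) and invoking a.e.\ convergence on a dense class.
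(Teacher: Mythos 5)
Your overall strategy (truncate the $t$-integral, identify $K_\phi^{\mathcal L}$ as a Calder\'on--Zygmund kernel, build $\Lambda(\varepsilon)$ from the local mass of $-\partial_t W_t^{\mathcal L}$ near the diagonal, then run CZ theory) is a genuinely different route from the paper, which instead transfers the Hermite case from the Ornstein--Uhlenbeck result of Garc\'{\i}a-Cuerva--Mauceri--Sj\"ogren--Torrea via the decomposition $\mathbb{T}_m^{\mathcal O}=T_M^{\mathcal O}+A_\phi$ and the identity $W_t^H(x,y)=e^{-t/2}e^{-(x^2+y^2)/2}W_t^{\mathcal O}(x,y)$, and then transfers the Laguerre case from the Hermite one. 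But your plan has a genuine gap exactly at what you call its technical heart: the assertion that $K_\phi^{L_\alpha}$ is a standard Calder\'on--Zygmund kernel on $(0,\infty)$ obtained from ``Gaussian-type bounds'' on $W_t^{L_\alpha}$ and its derivatives. The Laguerre heat kernel carries the Bessel factor $I_\alpha\bigl(2xye^{-t}/(1-e^{-2t})\bigr)$, and for small argument $I_\alpha(z)\sim z^\alpha$; hence near the boundary the kernel is \emph{not} Gaussian, it degenerates like $(xy)^{\alpha+1/2}$, and for $-1/2<\alpha<1/2$ the pointwise smoothness estimate $|\partial_xK_\phi^{L_\alpha}(x,y)|\lesssim|x-y|^{-2}$ fails as $x\to0^+$ with $y$ fixed (differentiating $x^{\alpha+1/2}$ produces an $x^{\alpha-1/2}$ singularity not controlled by $|x-y|^{-2}\sim y^{-2}$). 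The paper avoids this entirely: it only compares $K_\phi^{L_\alpha}$ with $K_\phi^H$ in the local region $x/2<y<2x$ (Proposition 3.1(d), with the non-CZ bound $\frac{C}{x}(1+\sqrt{x/|y-x|})$) and controls the global region by the Hardy-type operators $H_0^{\alpha+1/2}$, $H_\infty^{\alpha+1/2}$ via Proposition 3.1(c) and the Chicco Ruiz--Harboure lemmas. Without some such substitute for the smoothness estimate, your appeal to CZ theory on $(0,\infty)$ does not go through for the Laguerre family in the full range $\alpha>-1/2$.

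A secondary, smaller gap concerns the function $\Lambda$. Your definition of $\Lambda(\varepsilon)$ as $\int_0^\infty\phi(t)\theta_\varepsilon(t)\,dt$ with $\theta_\varepsilon$ ``the $x$-independent principal part of the local mass'' is only a heuristic: neither the Hermite nor the Laguerre kernel is translation invariant, so the quantity $\int_{|x-y|\le\varepsilon}(-\partial_t)W_t^{\mathcal L}(x,y)\,d\mu(y)$ genuinely depends on $x$, and splitting it into an $x$-independent part plus an error that is negligible uniformly as $\varepsilon\to0^+$ requires quantitative estimates you do not supply (this is precisely the content of Theorem 4.1 of GCMST, which the paper imports rather than reproves). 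The same applies to the claim $\Lambda(\varepsilon)\to\phi(0^+)$. These steps are fixable along the lines you sketch for the Hermite operator, but as written they are asserted rather than proved, and for Laguerre they again run into the boundary degeneracy described above.
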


The version of Theorem \ref{Principalvalue} for the Ornstein Uhlenbeck operator $\tilde{\mathcal{O}}=-\frac{1}{2}\frac{d^2}{dx^2}+x\frac{d}{dx}$ was
proved by Garc\'{\i}a-Cuerva, Mauceri, Sj\"ogren and Torrea
(\cite[Theorem 4.1]{GCMST}). The value $0$ is an eigenvalue of $\tilde{\mathcal{O}}$ and in this case it is assumed that $m(0)=0$. In the proof of Theorem \ref{Principalvalue} for the Hermite operator (see Subsection 2.1) we will use \cite[Theorem 4.1]{GCMST}.

Our characterization of the UMD spaces is the following.

\begin{Th}\label{UMD}
Let $\B$ be a Banach space. $\B$ is UMD if and only if, for some
(equivalently, for any) $1<p<\infty$ and every $\gamma \in
\mathbb{R}$, the imaginary power $\mathcal{L}^{i\gamma}$ defined on
$L^p(\Omega,\mu)\otimes \B$ can be extended to the Bochner-Lebesgue
space $L^p_\B(\Omega,\mu)$ as a bounded operator from
$L^p_\B(\Omega,\mu)$ into itself.
\end{Th}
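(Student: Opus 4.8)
The plan is to deduce both implications from the corresponding characterization for the one-dimensional Euclidean Laplacian $-d^2/dx^2$ obtained in \cite{Guer}, by comparing $\mathcal{L}^{i\gamma}$ with the Euclidean imaginary power near the diagonal. Throughout I fix $1<p<\infty$ and $\gamma\in\mathbb{R}$ and arrange every estimate to be reversible, so that the same computation serves both directions and simultaneously yields the equivalence of the ``for some'' and ``for any'' formulations. The starting point is Theorem \ref{Principalvalue} applied with $\phi=\phi_\gamma$, where $\phi_\gamma(t)=(\Gamma(1-i\gamma))^{-1}t^{-i\gamma}$; since $t^{-i\gamma}$ has no limit as $t\to 0^+$ when $\gamma\neq 0$, I stay in the representation \eqref{Tmexpression}. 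As the multiplication operator by the bounded function $\Lambda(\varepsilon)$ extends to $L^p_\B(\Omega,\mu)$ with norm independent of $\varepsilon$ for every Banach space $\B$, it is harmless, and the problem reduces to the principal value integral operator with kernel $K^\mathcal{L}_{\phi_\gamma}$.

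The core of the argument is a local/global splitting of this kernel. Writing the explicit heat kernels (the Mehler kernel for $H$ and the corresponding Laguerre heat kernel for $L_\alpha$), differentiating in $t$ and inserting into the definition of $K^\mathcal{L}_{\phi_\gamma}$, I would show that on a neighbourhood of the diagonal one can write
\begin{equation*}
K^\mathcal{L}_{\phi_\gamma}(x,y)=K^0_\gamma(x-y)+R^\mathcal{L}_\gamma(x,y),
\end{equation*}
where $K^0_\gamma$ is the convolution kernel of the Euclidean imaginary power $(-d^2/dx^2)^{i\gamma}$ and $R^\mathcal{L}_\gamma$, together with the off-diagonal part of $K^\mathcal{L}_{\phi_\gamma}$, satisfies pointwise size and regularity estimates ensuring that the associated integral operator is bounded on $L^p(\Omega,\mu)$ --- for instance via a Schur test or a scalar Calder\'on--Zygmund argument with an integrable majorant. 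The decisive point is that these bounds are estimates on the \emph{scalar} kernel; consequently the corresponding operator extends to $L^p_\B(\Omega,\mu)$ for \emph{every} Banach space $\B$, with a norm that in no way uses the UMD property. Producing these pointwise bounds --- controlling the residual singularity of $R^\mathcal{L}_\gamma$ on the diagonal and the decay of the global part with respect to $\mu$, and in the Laguerre case absorbing the Bessel-type singularity at the endpoint $x=0$ into the good part --- is the main technical obstacle.

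Granting the splitting, the theorem follows. If $\B$ is UMD then, by \cite{Guer}, $(-d^2/dx^2)^{i\gamma}$ extends to a bounded operator on $L^p_\B(\mathbb{R})$; adding back the universally bounded remainder shows that $\mathcal{L}^{i\gamma}$ is bounded on $L^p_\B(\Omega,\mu)$. Conversely, suppose $\mathcal{L}^{i\gamma}$ is bounded on $L^p_\B(\Omega,\mu)$ for every $\gamma\in\mathbb{R}$. Subtracting the remainder, which is bounded on $L^p_\B$ for an arbitrary $\B$, shows that the localized Euclidean imaginary power is bounded on $L^p_\B$; a transference argument --- testing against functions concentrated near a generic interior point of $\Omega$ and rescaling, then using the translation invariance of the Euclidean operator to pass from the local piece to the whole --- removes the localization and recovers the boundedness of $(-d^2/dx^2)^{i\gamma}$ on $L^p_\B(\mathbb{R})$ for every $\gamma$, so that $\B$ is UMD by \cite{Guer}. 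This transference step in the converse, together with the kernel estimates of the previous paragraph, are the two places requiring genuine work; the rest is a bookkeeping of bounded error terms.
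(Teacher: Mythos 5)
Your overall architecture --- compare $\mathcal{L}^{i\gamma}$ with the Euclidean imaginary power near the diagonal, show that the discrepancies are bounded on $L^p_\B$ for \emph{every} Banach space, invoke the characterization of \cite{Guer}, and un-localize by a dilation/transference argument in the converse --- is exactly the strategy of the paper. The paper, however, inserts the Ornstein--Uhlenbeck operator as an intermediary (doing the local comparison with the Laplacian in $L^p(\mathbb{R},e^{-x^2}dx)$, passing to Hermite only at $p=2$ and then extrapolating via the Calder\'on--Zygmund bounds of Proposition \ref{CZkernel}), and it reduces Laguerre to Hermite via the splitting \eqref{split} and Proposition \ref{kernels} rather than directly to the Laplacian. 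That detour is not cosmetic, and this is where your proposal has a genuine gap.

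The gap is your claim that every error term is controlled by ``pointwise size and regularity estimates... via a Schur test or a scalar Calder\'on--Zygmund argument with an integrable majorant.'' This fails for the piece relating the \emph{localized} Euclidean kernel $K^0_\gamma\chi_{\rm loc}$ to the full operator $(-d^2/dx^2)^{i\gamma}$: the discarded kernel has size $|x-y|^{-1}$ on the region $|x-y|\gtrsim (1+|x|+|y|)^{-1}$, and the operator with kernel $|x-y|^{-1}$ on that region is \emph{unbounded} on $L^p(\mathbb{R},dx)$ (testing on $\chi_{[0,N]}$ produces an extra factor $\log N$), so no scalar majorant or Schur test can handle it. Controlling this piece genuinely uses the $L^p_\B$-boundedness of the full Euclidean operator, via the covering argument of \cite[Proposition 3.4]{GCMST} --- which is naturally formulated in the Gaussian measure, hence the paper's route through $\mathcal{O}$ --- or a Banach-valued Cotlar-type inequality for variably truncated singular integrals. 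In the forward direction this can be repaired, since the full operator is already known to be bounded by UMD; in the converse your appeal to rescaling and translation invariance is the right tool and matches the paper's use of \cite[p. 21]{HTV}. Two further points are glossed over: the singularity to be absorbed in the Laguerre--Hermite comparison is the \emph{diagonal} one, $|K^{L_\alpha}_{\phi}(x,y)-K^H_{\phi}(x,y)|\leq Cx^{-1}(1+\sqrt{x/|x-y|})$ for $x/2<y<2x$ (Proposition \ref{kernels}(d)), not the endpoint $x=0$; and the global Hermite/Laguerre parts are bounded by Hardy-type operators and maximal-function estimates that hold only for $1<p<\infty$, which must be checked rather than asserted.
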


The paper is organized as follows. In Section 2 we prove our results
for the Hermite operator. In order to do this
we take advantage from the closed connection existing between the
Ornstein-Uhlenbeck and Hermite settings in the corresponding $L^2$
spaces (see \cite{AT}). The results for Laguerre operators are shown
in Section 3. We exploit some relationships between the Laplace
transform type multipliers in the Laguerre and Hermite contexts.
Estimates shown in Proposition \ref{kernels} are the key of our
technique. There, we compare the kernels $K_\phi ^{L_\alpha}$ and
$K_\phi^H$ in a local region, close to the diagonal, and also find
adequate bounds for both kernels in the global zone, i.e., outside
the local region. This estimations allow us to transfer the results
from the Hermite to the Laguerre cases. This method was first
developed by Betancor et al. in \cite{BFRST}.

Throughout the paper by C and c we represent positive constants that
can change from one line to another. Also, when $\mu=dx$, we simply write $L^p(\Omega)$.

\section{Proof of the results in the Hermite setting}

In this section we show Theorems \ref{Principalvalue} and \ref{UMD} in the Hermite context. The Hermite operator is defined by $\widetilde{H}=-\frac{1}{2}(\frac{d^2}{dx^2}-x^2)$ on $\mathbb{R}$. We have that $\widetilde{H}h_k=\lambda _kh_k$, where, for every $k\in \mathbb{N}$, $\lambda _k=k+1/2$ and $h_k$ denotes the $k$-th Hermite function given by
$$
h_k(x)=(2^k k! \sqrt{\pi })^{-1/2}\mathfrak{H}_k(x)e^{-x^2/2},\quad
x\in \mathbb{R},
$$
$\mathfrak{H}_k$ being the $k$-th Hermite polynomial (\cite{Szeg}). The system $\{h_k\}_{k\in\mathbb{N}}$ is an orthonormal basis in $L^2(\mathbb{R})$. We define the operator $H$ by \eqref{A1.1}. Note that $Hf=\widetilde{H}f$, when $f\in C_c^\infty (\mathbb{R})$, the space of smooth functions with compact support on $\mathbb{R}$.

The semigroup of operators $\{W_t^H\}_{t>0}$ generated by $-H$, defined by \eqref{A2} in $L^2(\mathbb{R})$, can be extended to $L^p(\mathbb{R})$, $1\le p\le\infty$, by (see \cite{ST})
$$
W_t^H(f)(x)=\int_{\mathbb{R}}W_t^H(x,y)f(y)dy,\quad x\in \mathbb{R}\mbox{ and }t>0,
$$
where
$$
W_t^H(x,y)=\frac{1}{\sqrt{\pi }}\left(\frac{e^{-t}}{1-e^{-2t}}\right)^{1/2}e^{-\frac{(x-e^{-t}y)^2+(y-e^{-t}x)^2}{2(1-e^{-2t})}},\quad x,y\in \mathbb{R} \mbox{ and }t>0.
$$

Suppose that $m$ is a function of Laplace transform type given by \eqref{m}. We define the spectral multiplier $T_m^H$ associated with $m$ on $L^2(\mathbb{R})$ by
$$
T_m^H(f)=\sum_{k=0}^\infty m(\lambda _k)c_k^H(f)h_k,
$$
where $\displaystyle c_k^H(f)=\int_\mathbb{R} h_k(y)f(y)dy$, $k\in \mathbb{N}$, and $f\in L^2(\mathbb{R})$.


\subsection{Proof of Theorem \ref{Principalvalue} for the Hermite operator}

We consider the one dimensional Ornstein-Uhlenbeck operator $\widetilde{\mathcal{O}}=-\frac{1}{2}\frac{d^2}{dx^2}+x\frac{d}{dx}$. For every $k\in \mathbb{N}$, $\widetilde{\mathcal{O}}H_k=kH_k$, where $H_k(x)=e^{\frac{x^2}{2}}h_k$, $x\in \mathbb{R}$, is the $k$-th normalized Hermite polynomial (\cite{Szeg}). We define the operator $\mathcal{O}$ by \eqref{A1.1}. The semigroup of operators $\{W_t^\mathcal{O}\}_{t>0}$ generated by $-\mathcal{O}$, defined on $L^2(\mathbb{R},e^{-x^2}dx)$ by \eqref{A2}, is extended to $L^p(\mathbb{R},e^{-x^2}dx)$, $1\le p\le \infty$, by
$$
W_t^\mathcal{O}(f)(x)=\int_\mathbb{R}W_t^\mathcal{O}(x,y)f(y)e^{-y^2}dy,\quad f\in L^p(\mathbb{R}, e^{-x^2}dx)\mbox{ and }t>0,$$
where
$$
W_t^\mathcal{O}(x,y)=\frac{1}{\sqrt{\pi (1-e^{-2t})}}e^{-\frac{(e^{-t}x-y)^2}{1-e^{-2t}}+y^2},\quad x,y\in \mathbb{R}\mbox{ and }t>0.
$$

The spectral multiplier $T_m^\mathcal{O}$ associated to $\mathcal{O}$  and defined by $m$ is
$$
T_m^\mathcal{O}(f)=\sum_{k=0}^\infty m(k)c_k^\mathcal{O}(f)H_k,\quad f\in L^2(\mathbb{R}, e^{-x^2}dx),
$$
where $\displaystyle c_k^\mathcal{O}(f)=\int_\mathbb{R} H_k(y)f(y)e^{-y^2}dy$, $k\in \mathbb{N}$, $f\in L^2(\mathbb{R}, e^{-x^2}dx)$
and we consider $m(0)=0$.

In order to study the operator $T_m^H$ we consider the multiplier $\mathbb{T}_m^\mathcal{O}$ associated with the Ornstein-Uhlenbeck operator defined on $L^2(\mathbb{R}, e^{-x^2}dx)$ by
$$
\mathbb{T}_m^\mathcal{O}(f)=\sum_{k=0}^\infty m\left(k+\frac{1}{2}\right)c_k^\mathcal{O}(f)H_k,\quad f\in L^2(\mathbb{R}, e^{-x^2}dx).
$$

We can write
\begin{align}\label{TTA}
\mathbb{T}_m^\mathcal{O}(f)&=\sum_{k=0}^\infty \left(k+\frac{1}{2}\right)c_k^\mathcal{O}(f)H_k\int_0^\infty e^{-(k+1/2)t}\phi (t)dt\nonumber\\
&=T_M^\mathcal{O}(f)+\frac{1}{2}\sum_{k=0}^\infty c_k^\mathcal{O}(f)H_k\int_0^\infty e^{-(k+1/2)t}\phi (t)dt\nonumber\\
&=T_M^\mathcal{O}(f)+A_\phi (f), \quad f\in L^2(\mathbb{R}, e^{-x^2}dx),
\end{align}
where $M(\lambda)=\lambda \int_0^\infty e^{-\lambda t}e^{-t/2}\phi (t)dt$, $\lambda \in (0,\infty )$, and
$$
A_\phi (f)=\frac{1}{2}\sum_{k=0}^\infty \int_0^\infty e^{-(k+1/2)t}\phi (t)dtc_k^\mathcal{O}(f)H_k, \quad f\in L^2(\mathbb{R}, e^{-x^2}dx).
$$

Since the semigroup $\{W_t^\mathcal{O}\}_{t>0}$ generated by $-\mathcal{O}$ is a symmetric diffusion semigroup, by \cite[Corollary 3, p. 121]{Stei} $T_M^\mathcal{O}$ can be extended to $L^p(\mathbb{R},e^{-x^2}dx)$ as a bounded operator from $L^p(\mathbb{R},e^{-x^2}dx)$ into itself, for every $1<p<\infty$. Moreover, according to \cite[Theorem 3.8]{GCMST}, $T_M^\mathcal{O}$ can be extended to $L^1(\mathbb{R},e^{-x^2}dx)$ as a bounded operator from $L^1(\mathbb{R},e^{-x^2}dx)$ into $L^{1,\infty}(\mathbb{R},e^{-x^2}dx)$.

Let now $f,g\in L^2(\mathbb{R}, e^{-x^2}dx)$. We have that
\begin{align*}
\int_\mathbb{R}A_\phi (f)(x)g(x)e^{-x^2}dx&=\frac{1}{2}\sum_{k=0}^\infty c_k^\mathcal{O}(f)\overline{c_k^\mathcal{O}(g)}\int_0^\infty e^{-(k+1/2)t}\phi (t)dt\\
&=\frac{1}{2}\int_\mathbb{R}\int_0^\infty \phi(t)e^{-t/2}W_t^\mathcal{O}(f)(x)dtg(x)e^{-x^2}dx\\
&=\frac{1}{2}\int_\mathbb{R}g(x) \left(\int_\mathbb{R}\int_0^\infty \phi(t)e^{-t/2}W_t^\mathcal{O}(x,y)dtf(y)e^{-y^2}dy \right)e^{-x^2}dx.
\end{align*}
To justify the above manipulations we observe that
\begin{align*}
\int_0^\infty \int_\mathbb{R}\int_\mathbb{R} & |\phi(t)| e^{-t/2}W_t^\mathcal{O}(x,y)|f(y)|e^{-y^2}|g(x)|e^{-x^2}dxdydt\\
&\leq C \int_0^\infty e^{-t/2} \|W_t^\mathcal{O}(|f|)\|_{L^2(\mathbb{R},e^{-x^2}dx)}
\|g\|_{L^2(\mathbb{R},e^{-x^2}dx)}dt\\
&\leq C \|f\|_{L^2(\mathbb{R}, e^{-x^2}dx)}\|g\|_{L^2(\mathbb{R}, e^{-x^2}dx)}.
\end{align*}

Hence, we get
\begin{equation}\label{8.1}
A_\phi (f)(x)=\int_\mathbb{R}A^\mathcal{O}(x,y)f(y)e^{-y^2}dy,\quad \mbox{ a.e. }x\in \mathbb{R},
\end{equation}
being $A^\mathcal{O}(x,y)=\frac{1}{2}\int_0^\infty \phi(t)W_t^ \mathcal{O}(x,y)e^{-t/2}dt$, $x,y\in \mathbb{R}$.

Note that
$$|A_\phi(f)| \leq C \sup_{t>0} W_t^\mathcal{O}(|f|).$$
Then, by \cite{Mu} and \cite{Sj}, $A_\phi$ is bounded from $L^p(\mathbb{R},e^{-x^2}dx)$ into itself, $1< p<\infty$, and from
$L^1(\mathbb{R},e^{-x^2}dx)$ into $L^{1,\infty}(\mathbb{R},e^{-x^2}dx)$.

Hence, $\mathbb{T }_m^\mathcal{O}$ can be extended to $L^p(\mathbb{R},e^{-x^2}dx)$ as a bounded operator from $L^p(\mathbb{R}, e^{-x^2}dx)$ into itself, for each $1<p<\infty$, and from $L^1(\mathbb{R}, e^{-x^2}dx)$ into $L^{1,\infty }(\mathbb{R}, e^{-x^2}dx)$.

According to \cite[Theorem 4.1]{GCMST} \eqref{TTA} and \eqref{8.1}, there exists $\Lambda\in L^\infty(0,\infty)$ such that
$$
\mathbb{T}_m^\mathcal{O}(f)(x)=\lim_{\varepsilon \rightarrow 0^+}\left(\Lambda (\varepsilon )f(x)+\int_{|x-y|>\varepsilon}(K_\varphi ^\mathcal{O}(x,y)+A^\mathcal{O}(x,y))f(y)e^{-y^2}dy\right),\quad \mbox{ a.e. }x\in \mathbb{R},
$$
and, if the limit $\lim_{t\to 0^+}\phi(t)=\phi(0^+)$ exists, then
$$
\mathbb{T}_m^\mathcal{O}(f)(x)=\phi(0^+)f(x)+\lim_{\varepsilon \rightarrow 0^+}\int_{|x-y|>\varepsilon}(K_\varphi ^\mathcal{O}(x,y)+A^\mathcal{O}(x,y))f(y)e^{-y^2}dy,\quad \mbox{ a.e. }x\in \mathbb{R},
$$
for every $f\in L^p(\mathbb{R},e^{-x^2}dx)$, $1\le p<\infty$, where $\varphi (t)=e^{-t/2}\phi (t)$, $t>0$, and
$$
K_\varphi ^\mathcal{O}(x,y)=\int_0^\infty \varphi (t)\Big(-\frac{\partial }{\partial t}\Big)W_t^\mathcal{O}(x,y)dt,\quad x,y\in \mathbb{R}, \ x\not=y.
$$

We conclude that
\begin{equation}\label{Ornsteinprincipalvalue2}
\mathbb{T}_m^\mathcal{O}(f)(x)=\lim_{\varepsilon \rightarrow 0^+}\left(\Lambda (\varepsilon )f(x)+\int_{|x-y|>\varepsilon}B_\phi ^\mathcal{O}(x,y)f(y)e^{-y^2}dy\right),\quad \mbox{ a.e. }x\in \mathbb{R},
\end{equation}
and, if the limit $\lim_{t\to 0^+}\phi(t)=\phi(0^+)$ exists, then
\begin{equation}\label{Or99}
\mathbb{T}_m^\mathcal{O}(f)(x)=\phi(0^+)f(x)+\lim_{\varepsilon \rightarrow 0^+}\int_{|x-y|>\varepsilon}B_\phi ^\mathcal{O}(x,y)f(y)e^{-y^2}dy,\quad \mbox{ a.e. }x\in \mathbb{R},
\end{equation}
for every $f\in L^p(\mathbb{R},e^{-x^2}dx)$, $1\le p<\infty$, where
$$B_\phi ^\mathcal{O}(x,y)=\int_0^\infty \phi (t)\Big(-\frac{\partial}{\partial t}\Big)(e^{-t/2}W_t^ \mathcal{O}(x,y))dt, \quad x,y\in \mathbb{R}, \ x\not=y.$$

It is clear that, for every $f\in L^2(\mathbb{R})$,
\begin{equation}\label{TmHTmO}
T_m^H(f)(x)=e^{-x^2/2}\mathbb{T}_m^\mathcal{O}(e^{y^2/2}f)(x),\quad \text{a.e. } x\in \mathbb{R}.
\end{equation}

By taking into account that $W_t^H(x,y)=e^{-t/2}e^{-(x^2+y^2)/2}W_t^\mathcal{O}(x,y)$, $x,y\in \mathbb{R}$ and $t>0$, from \eqref{Ornsteinprincipalvalue2}, \eqref{Or99} and \eqref {TmHTmO} we deduce that, for every $f\in L^2(\mathbb{R})$,
\begin{equation}\label{TmHprincipalvalue}
T_m^H(f)(x)=\lim_{\varepsilon \rightarrow 0^+}\left(\Lambda (\varepsilon )f(x)+\int_{|x-y|>\varepsilon}K_\phi ^H(x,y)f(y)dy\right),\quad \mbox{ a.e. }x\in \mathbb{R},
\end{equation}
and, if the limit $\lim_{t\to 0^+}\phi(t)=\phi(0^+)$ exists, then
\begin{equation}\label{Or99H}
T_m^H(f)(x)=\phi(0^+)f(x)+\lim_{\varepsilon \rightarrow 0^+}\int_{|x-y|>\varepsilon}K_\phi^H(x,y)f(y)dy,\quad \mbox{ a.e. }x\in \mathbb{R},
\end{equation}
where
$$K_\phi ^H(x,y)=\int_0^\infty \phi (t)\Big(-\frac{\partial}{\partial t}\Big)W_t^H(x,y)dt, \quad x,y\in \mathbb{R}, \ x\not=y.$$
Hence, if $f\in L^2(\mathbb{R})$,
$$
T_m^H(f)(x)=\int_\mathbb{R}K_\phi ^H(x,y)f(y)dy,\quad \text{a.e. } x\in \mathbb{R}\setminus \mbox{supp}(f).
$$

The kernel function $K_\phi ^H$ is a standard Calder\'on-Zygmund kernel.
\begin{Prop}\label{CZkernel}
Let $\phi \in L^\infty (0,\infty)$. Then, there exists $C>0$ such that
\begin{equation}\label{CZ1}
|K_\phi ^H(x,y)|\leq \frac{C}{|x-y|},\quad x,y\in \mathbb{R}, \ x\not=y,
\end{equation}
and
\begin{equation}\label{CZ2}
\Big|\frac{\partial }{\partial x}K_\phi
^H(x,y)\Big|+\Big|\frac{\partial }{\partial y}K_\phi
^H(x,y)\Big|\leq \frac{C}{|x-y|^2}, \quad x,y\in \mathbb{R}, \
x\not=y.
\end{equation}
\end{Prop}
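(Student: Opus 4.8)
The plan is to reduce both Calder\'on--Zygmund estimates to pointwise bounds on the time derivatives of the Mehler kernel and then to estimate the resulting one--parameter integrals using the explicit formula for $W_t^H$. Since $\phi\in L^\infty(0,\infty)$, the definition of $K_\phi^H$ gives at once
\[
|K_\phi^H(x,y)|\le \|\phi\|_\infty\int_0^\infty\Big|\frac{\partial}{\partial t}W_t^H(x,y)\Big|\,dt ,
\]
and, once the integrals below are shown to converge absolutely, differentiation under the integral sign is justified and yields $|\partial_x K_\phi^H(x,y)|\le \|\phi\|_\infty\int_0^\infty|\partial_t\partial_x W_t^H(x,y)|\,dt$, and similarly for $\partial_y$. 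Thus \eqref{CZ1} and \eqref{CZ2} follow from the three estimates $\int_0^\infty|\partial_t W_t^H|\,dt\le C/|x-y|$ and $\int_0^\infty|\partial_t\partial_x W_t^H|\,dt+\int_0^\infty|\partial_t\partial_y W_t^H|\,dt\le C/|x-y|^2$.

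To evaluate these integrals I would pass to the variable $u=e^{-t}\in(0,1)$. Since $\partial_t=-u\,\partial_u$ and $dt=-du/u$, the factor $u$ cancels and $\int_0^\infty|\partial_t[G(e^{-t})]|\,dt=\int_0^1|G'(u)|\,du$ for any smooth $G$, so the weight disappears cleanly. Writing $(x-e^{-t}y)^2+(y-e^{-t}x)^2=(x^2+y^2)(1+u^2)-4uxy$ and completing squares, the Mehler kernel becomes
\[
W_t^H(x,y)=\frac{1}{\sqrt\pi}\,\frac{u^{1/2}}{(1-u^2)^{1/2}}\,
\exp\!\Big(-\tfrac{1+u}{4(1-u)}(x-y)^2-\tfrac{1-u}{4(1+u)}(x+y)^2\Big).
\]
This is the decisive identity: it isolates the singular Gaussian in $(x-y)^2$, whose coefficient blows up like $1/(1-u)$ as $u\to1$, from the non-singular Gaussian in $(x+y)^2$, whose coefficient is $O(1-u)$ there.

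For the size estimate I would compute $\partial_u W_t^H=W_t^H\big(P'/P-E'(u)\big)$, with $E'(u)=\frac{(x-y)^2}{2(1-u)^2}-\frac{(x+y)^2}{2(1+u)^2}$ and $P(u)=u^{1/2}(1-u^2)^{-1/2}$, and split $\int_0^1$ into a local piece $u\in(1/2,1)$ and a global piece $u\in(0,1/2)$. On the local piece the dominant term is $(1-u)^{-1/2}(x-y)^2(1-u)^{-2}e^{-c(x-y)^2/(1-u)}$; the substitution $w=(x-y)^2/(1-u)$ turns its integral into $|x-y|^{-1}\int_0^\infty w^{1/2}e^{-cw}\,dw$, giving the bound $C/|x-y|$. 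On the global piece both coefficients in the exponent are bounded below, so $W_t^H$ and $\partial_u W_t^H$ are controlled by $Ce^{-c(x^2+y^2)}$ times an integrable power of $u$, and one closes using $x^2+y^2\ge\tfrac12(x-y)^2$, whence $e^{-c(x^2+y^2)}\le e^{-c(x-y)^2/2}\le C/|x-y|$. The gradient estimate is the same argument with one extra spatial derivative: $\partial_x W_t^H=-W_t^H\,\partial_xE$, with $\partial_xE=\frac{(1+u)(x-y)}{2(1-u)}+\frac{(1-u)(x+y)}{2(1+u)}$, and after applying $\partial_u$ the worst local term scales like $|x-y|^3(1-u)^{-7/2}e^{-c(x-y)^2/(1-u)}$; the same substitution now yields $|x-y|^{-2}\int_0^\infty w^{3/2}e^{-cw}\,dw$, i.e. $C/|x-y|^2$. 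The $\partial_y$ derivative is identical by the symmetry $x\leftrightarrow y$.

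The main obstacle, and where care is needed, is the presence of the factors $x+y$ (and their powers) coming from $E'$ and $\partial_xE$: a priori these grow without bound. Near the diagonal they are harmless because their coefficient carries a factor $1-u$, and the non-singular Gaussian supplies the uniform bound $\sup_{\xi}|\xi|^k e^{-c(1-u)\xi^2}\le C(1-u)^{-k/2}$, so each power of $|x+y|$ is paid for by a compensating power of $(1-u)^{1/2}$ and never interferes with the $(x-y)$-scaling above. Away from the diagonal they are absorbed, together with the prefactor, into the Gaussian decay $e^{-c(x^2+y^2)}$, since any polynomial times $e^{-c(x^2+y^2)}$ is dominated by $e^{-c'(x^2+y^2)}\le C|x-y|^{-2}$. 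Collecting the local and global contributions gives \eqref{CZ1} and \eqref{CZ2}.
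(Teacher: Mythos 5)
Your proof is correct and follows essentially the same route as the paper's: both reduce \eqref{CZ1}--\eqref{CZ2} to bounding $\int_0^\infty|\partial_t W_t^H(x,y)|\,dt$ (and its spatial derivatives), split the integral at $t\sim 1$, and play the singular Gaussian factor $e^{-c(x-y)^2/t}$ against the power $(1-e^{-2t})^{-3/2}$ via a scaling substitution to produce $|x-y|^{-1}$ (resp.\ $|x-y|^{-2}$). The only cosmetic differences are that you use $u=e^{-t}$ where the paper uses Meda's change of variable $t=\log\frac{1+s}{1-s}$, and you derive in-line the pointwise derivative bound and the elementary integral estimate that the paper imports from \cite[(2.3)]{BMoR} and \cite[Lemma 1.1]{ST}.
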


\begin{proof}
It is not difficult to see (\cite[(2.3)]{BMoR}) that
$$
\left|\frac{\partial }{\partial t}W_t^H(x,y)\right|\leq  Ce^{-\frac{(x-e^{-t}y)^2+(y-e^{-t}x)^2}
{8(1-e^{-2t})}}\frac{e^{-t/2}}{(1-e^{-2t})^{3/2}},\quad x,y\in
\mathbb{R} \mbox{ and }t>0.
$$

Then, by making the change of variables $t=\log(\frac{1+s}{1-s})$ (due to Meda) we get
\begin{align*}
|K_\phi ^H(x,y)|&\leq C\|\phi\|_{L^\infty (0,\infty )}\int_0^1\frac{e^{-\frac{(x-y)^2}{16s}-s\frac{(x+y)^2}{16}}}{s^{3/2}(1-s)^{1/2}}ds\\ &\leq C\|\phi\|_{L^\infty (0,\infty )}\left(\int_0^{1/2}\frac{e^{-\frac{(x-y)^2}{16s}}}{s^{3/2}}ds+\int_{\frac{1}{2}}^1\frac{e^{-\frac{(x-y)^2}{16s}}}{(1-s)^{1/2}}ds\right)\\
&\leq C\|\phi\|_{L^\infty(0,\infty)}\left(\int_0^1\frac{e^{-\frac{(x-y)^2}{16s}}}{s^{3/2}}ds+\frac{1}{|x-y|}\right),\quad x,y\in \mathbb{R}, \ x\not=y.
\end{align*}
In the last inequality we have used that for every $a>0$ and $b\geq 0$, there exists $c>0$ such that $u^be^{-au}\leq c$, $u>0$.
From \cite[Lemma 1.1]{ST} we conclude that
$$
|K_\phi ^H(x,y)|\leq \frac{C}{|x-y|}, \quad x,y\in \mathbb{R}, \ x\not=y.
$$
By proceeding in a similar way \eqref{CZ2} can be shown.
\end{proof}

Calder\'on-Zygmund theory for singular integrals implies that the limits in \eqref{TmHprincipalvalue} and \eqref{Or99H} exist for every $f\in L^p(\mathbb{R})$, $1\le p<\infty$, and $T_m^ H$ can be extended to $L^p(\mathbb{R})$ by \eqref{TmHprincipalvalue} as a bounded operator from $L^p(\mathbb{R})$ into itself, for every $1<p<\infty$, and from $L^1(\mathbb{R})$ into $L^{1,\infty }(\mathbb{R})$.

Thus the proof of Theorem \ref{Principalvalue} for the Hermite operator is finished.

\subsection{Proof of Theorem \ref{UMD} for the Hermite operator}

Let $\B$ be a Banach space and $\phi \in L^\infty(0,\infty)$. By using \eqref{Ornsteinprincipalvalue2}, \eqref{Or99} and \cite[Theorem 4.1]{GCMST}
we can define $T_M^\mathcal{O}$, where $M(\lambda)=\lambda \int_0^\infty e^{-\lambda t}e^{-t/2}\phi(t)dt$, $\lambda>0$, and $\mathbb{T}_m^\mathcal{O}$, in a natural way, on $L^p(\mathbb{R}, e^{-x^2}dx)\otimes \B$, for every $1\leq p<\infty$. Since the operator $A_\phi$ can be extended to $L^p_\B(\mathbb{R},e^{-x^2}dx)$ as a bounded operator from $L^p_\B(\mathbb{R}, e^{-x^2}dx)$ into itself, for every $1< p < \infty$, we can deduce the next property.

\begin{Lem} \label{Lemap}
Let $\B$ be a Banach space, $\phi \in L^\infty(0,\infty)$ and  $1<r<\infty$. The following assertions are equivalent.

(i) $T_M^\mathcal{O}$ can be extended to $L^r_\B(\mathbb{R}, e^{-x^2}dx)$ as a bounded operator from $L^r_\B(\mathbb{R}, e^{-x^2}dx)$ into itself, where $M(\lambda)=\lambda \int_0^\infty e^{-\lambda t}e^{-t/2}\phi(t)dt$, $\lambda>0$.

(ii) $\mathbb{T}_m^\mathcal{O}$ can be extended  to $L^r_\B(\mathbb{R}, e^{-x^2}dx)$ as a bounded operator from $L^r_\B(\mathbb{R}, e^{-x^2}dx)$ into itself, where $m(\lambda)=\lambda \int_0^\infty e^{-\lambda t}\phi(t)dt$, $\lambda>0$.
\end{Lem}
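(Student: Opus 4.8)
The plan is to deduce the equivalence directly from the decomposition \eqref{TTA}, exploiting that the difference between the two multipliers is an operator already known to be bounded for every $\B$. Recall that on $L^2(\mathbb{R}, e^{-x^2}dx)$ identity \eqref{TTA} reads $\mathbb{T}_m^\mathcal{O} = T_M^\mathcal{O} + A_\phi$, with $M(\lambda) = \lambda \int_0^\infty e^{-\lambda t} e^{-t/2}\phi(t)\,dt$. First I would note that all three operators have been defined on the tensor product $L^r(\mathbb{R}, e^{-x^2}dx) \otimes \B$ in the paragraph preceding the statement, and that this identity, being linear in $f$, extends verbatim to elements $f = \sum_j g_j \otimes b_j$ of the tensor product by applying the scalar relation in each coordinate.

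The decisive ingredient, recorded just above the statement, is that $A_\phi$ extends to a bounded operator on $L^r_\B(\mathbb{R}, e^{-x^2}dx)$ for every $1 < r < \infty$ and \emph{every} Banach space $\B$; this rests on the pointwise domination $\|A_\phi(f)\|_\B \le C \sup_{t>0} W_t^\mathcal{O}(\|f\|_\B)$, which holds because $W_t^\mathcal{O}$ has a positive kernel, together with the $L^r$-boundedness of the scalar Ornstein--Uhlenbeck maximal operator. Since $A_\phi$ is thus unconditionally bounded on $L^r_\B$, it cannot affect whether the remaining summand is bounded.

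With these two facts in place the equivalence is immediate and there is no genuine obstacle. If (i) holds, then $\mathbb{T}_m^\mathcal{O} = T_M^\mathcal{O} + A_\phi$ is a sum of two operators bounded on $L^r_\B$, hence bounded, which is (ii); conversely, if (ii) holds, then $T_M^\mathcal{O} = \mathbb{T}_m^\mathcal{O} - A_\phi$ is a difference of bounded operators, hence bounded, which is (i). In each direction the estimate is obtained first on the dense subspace $L^r(\mathbb{R}, e^{-x^2}dx) \otimes \B$ and then propagated to all of $L^r_\B(\mathbb{R}, e^{-x^2}dx)$ by density and continuity. The only point deserving a word of care, and the closest thing to a difficulty, is this final passage from the tensor product to the full Bochner space, which is routine once the uniform bounds are in hand.
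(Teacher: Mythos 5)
Your argument is correct and is essentially the paper's own: the Lemma is deduced there precisely from the decomposition $\mathbb{T}_m^\mathcal{O}=T_M^\mathcal{O}+A_\phi$ of \eqref{TTA} together with the observation, recorded just before the statement, that $A_\phi$ extends boundedly to $L^r_\B(\mathbb{R},e^{-x^2}dx)$ for every Banach space $\B$ via the domination by $\sup_{t>0}W_t^\mathcal{O}(\|f\|_\B)$ and the maximal theorem of Muckenhoupt and Sj\"ogren. No discrepancy to report.
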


The multiplier $T_m^H$ can be defined on $L^p(\mathbb{R})\otimes \B$, for every $1\leq p<\infty$, in a natural way by using \eqref{TmHprincipalvalue}. Next property follows from  \eqref{TmHTmO}.

\begin{Lem} \label{lema2} Let $\B$ be a Banach space and $m$ a function of Laplace transform type. The following assertions are equivalent.

(i) $T_m^H$ can be extended to $L^2_\B(\mathbb{R})$ as a bounded operator from $L^2_\B(\mathbb{R})$ into itself.

(ii) $\mathbb{T}_m^\mathcal{O}$ can be extended to $L^2_\B(\mathbb{R},e^{-x^2}dx)$ as a bounded operator from $L^2_\B(\mathbb{R},e^{-x^2}dx)$ into itself.
\end{Lem}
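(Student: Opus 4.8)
The plan is to exploit the pointwise identity \eqref{TmHTmO} to realize $T_m^H$ and $\mathbb{T}_m^\mathcal{O}$ as conjugates of one another by an isometric isomorphism between the two $L^2$ spaces, and then to observe that this conjugation passes verbatim to the Banach-valued setting.

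First I would introduce the multiplication operator $U$ defined by $U(f)(x)=e^{x^2/2}f(x)$. Because $|e^{x^2/2}|^2e^{-x^2}=1$, the operator $U$ is an isometric isomorphism from $L^2(\mathbb{R})$ onto $L^2(\mathbb{R},e^{-x^2}dx)$, with inverse $U^{-1}(g)(x)=e^{-x^2/2}g(x)$; this is exactly the place where the exponent $2$ is essential, since for other values of $p$ the analogous multiplication would fail to be an isometry. Since $U$ and $U^{-1}$ act by multiplication by a scalar-valued function, they extend componentwise to isometric isomorphisms between $L^2_\B(\mathbb{R})$ and $L^2_\B(\mathbb{R},e^{-x^2}dx)$, and they carry the dense tensor subspace $L^2(\mathbb{R})\otimes\B$ onto $L^2(\mathbb{R},e^{-x^2}dx)\otimes\B$.

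Next I would rewrite \eqref{TmHTmO} as the intertwining relation $T_m^H=U^{-1}\mathbb{T}_m^\mathcal{O}U$, valid a priori for scalar $f\in L^2(\mathbb{R})$. On the tensor product $L^2(\mathbb{R})\otimes\B$ both spectral multipliers and the operators $U^{\pm1}$ act only on the scalar factor, so the identity extends by linearity to the vector-valued setting on this dense subspace; equivalently $\mathbb{T}_m^\mathcal{O}=UT_m^HU^{-1}$ on $L^2(\mathbb{R},e^{-x^2}dx)\otimes\B$. Finally, since $U$ and $U^{-1}$ are isometric isomorphisms, conjugation by them preserves operator norms: the $\B$-valued extension of $T_m^H$ to $L^2_\B(\mathbb{R})$ is bounded if and only if the $\B$-valued extension of $\mathbb{T}_m^\mathcal{O}$ to $L^2_\B(\mathbb{R},e^{-x^2}dx)$ is bounded, with equal norms, each extension being obtained from the other by composing with $U^{\pm1}$ and closing up by density. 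This gives the equivalence of (i) and (ii).

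I do not expect a serious analytic obstacle here, as the statement is essentially a conjugation by an isometry at the level $p=2$. The only point demanding genuine care is verifying the mutual compatibility, on the dense subspace $L^2(\mathbb{R})\otimes\B$, of the spectral multipliers, the multiplication operators $U^{\pm1}$, and the density/extension procedure, so that the scalar identity \eqref{TmHTmO} legitimately transfers to the Bochner spaces; everything else is then a formal consequence of the isometry property established in the second step.
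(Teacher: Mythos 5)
Your proposal is correct and is precisely the argument the paper intends: the paper proves Lemma \ref{lema2} simply by invoking \eqref{TmHTmO}, which is exactly the intertwining relation $T_m^H=U^{-1}\mathbb{T}_m^\mathcal{O}U$ with $U$ the multiplication by $e^{x^2/2}$, an isometric isomorphism of $L^2(\mathbb{R})$ onto $L^2(\mathbb{R},e^{-x^2}dx)$ that extends componentwise to the Bochner spaces. You have merely written out in full the one-line justification the authors leave implicit, including the correct observation that the exponent $p=2$ is what makes the conjugation an isometry.
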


Let $\gamma \in \mathbb{R}$. We define $\phi _\gamma (t)=\frac{t^{-i\gamma}}{\Gamma (1-i\gamma )}$, $t\in (0,\infty )$. If $m_\gamma (\lambda)=\lambda \int_0^\infty e^{-\lambda t} \phi _\gamma (t)dt=\lambda ^{i\gamma }$, $\lambda \in (0,\infty )$, we have that
$$
T_{m_\gamma }^H=H^{i\gamma }\quad \mbox{ and }\quad \mathbb{T}_{m_\gamma }^\mathcal{O}=\Big(\mathcal{O}+\frac{1}{2}\Big)^{i\gamma }.
$$

In the following, we characterize UMD spaces by the imaginary powers $H^{i\gamma }$ and $\Big(\mathcal{O}+\frac{1}{2}\Big)^{i\gamma }$.

\begin{Prop}
Let $\B$ a Banach space. The following assertions are equivalent.

(i) $\B$ is UMD.

(ii) For some (equivalently, for every) $1<p<\infty$ and for every $\gamma \in \mathbb{R}$, $\Big(\mathcal{O}+\frac{1}{2}\Big)^{i\gamma }$ can be extended to $L^p_\B(\mathbb{R},e^{-x^2}dx)$ as a bounded operator from $L^p_\B(\mathbb{R},e^{-x^2}dx)$ into itself.

(iii) For some (equivalently, for every) $1<p<\infty$ and for every $\gamma \in \mathbb{R}$, $H^{i\gamma }$ can be extended to $L^p_\B(\mathbb{R})$ as a bounded operator from $L^p_\B(\mathbb{R})$ into itself.
\end{Prop}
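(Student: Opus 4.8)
The plan is to prove the three assertions equivalent by moving freely between $H$ and the shifted Ornstein--Uhlenbeck operator $\mathcal{O}+\tfrac12$, and then identifying the genuinely UMD--sensitive part of $H^{i\gamma}$ with an imaginary power of the Laplacian, for which the characterization of \cite{Guer} applies. I fix $\gamma\in\mathbb{R}$ and recall that $H^{i\gamma}=T_{m_\gamma}^H$ and $\big(\mathcal{O}+\tfrac12\big)^{i\gamma}=\mathbb{T}_{m_\gamma}^\mathcal{O}$, where $m_\gamma$ is of Laplace transform type with $\phi_\gamma(t)=t^{-i\gamma}/\Gamma(1-i\gamma)\in L^\infty(0,\infty)$; all constants below may depend on the fixed $\gamma$.

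First I would settle the $p$--independence inside (ii) and (iii) and the equivalence (ii)$\Leftrightarrow$(iii). By Proposition \ref{CZkernel} the kernel $K_{\phi_\gamma}^H$ is a standard Calder\'on--Zygmund kernel, so by the vector-valued Calder\'on--Zygmund theory for operators with scalar kernels, boundedness of $H^{i\gamma}$ on a single $L^{p_0}_\B(\mathbb{R})$ forces boundedness on every $L^p_\B(\mathbb{R})$, $1<p<\infty$, the extension agreeing with the principal value of Theorem \ref{Principalvalue}; this is the ``some $\Leftrightarrow$ every'' in (iii). The analogous $p$--independence in (ii) I would read off from the local (Gaussian) Calder\'on--Zygmund theory of \cite{GCMST}, applied to the kernel $B_{\phi_\gamma}^\mathcal{O}$ of \eqref{Ornsteinprincipalvalue2}. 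At the single exponent $p=2$, Lemma \ref{lema2} identifies boundedness of $H^{i\gamma}$ on $L^2_\B(\mathbb{R})$ with that of $\big(\mathcal{O}+\tfrac12\big)^{i\gamma}$ on $L^2_\B(\mathbb{R},e^{-x^2}dx)$, Lemma \ref{Lemap} guaranteeing that replacing $\mathbb{T}_{m_\gamma}^\mathcal{O}$ by $T_M^\mathcal{O}$ only costs the maximal-type operator $A_{\phi_\gamma}$, harmless for every $\B$; combined with the two $p$--independence statements this gives (ii)$\Leftrightarrow$(iii).

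The substantive point is (i)$\Leftrightarrow$(iii). By \cite{Guer}, $\B$ is UMD if and only if, for every $\gamma$, the Laplacian imaginary power $\big(-\tfrac{d^2}{dx^2}\big)^{i\gamma}$ is bounded on $L^p_\B(\mathbb{R})$ for some (equivalently every) $1<p<\infty$. I would therefore compare $H^{i\gamma}$ with a fixed multiple $c_\gamma\big(-\tfrac{d^2}{dx^2}\big)^{i\gamma}$ of this model. Representing both as principal value integral operators with scalar kernels, I would split $\mathbb{R}\times\mathbb{R}$ into a region near the diagonal and its complement. Away from the diagonal, $K_{\phi_\gamma}^H$ and the model kernel each admit bounds yielding operators bounded on $L^p_\B(\mathbb{R})$ for an arbitrary Banach space $\B$; near the diagonal, the \emph{difference} of the two kernels is dominated by a locally integrable function, so it too produces an operator bounded on every $L^p_\B(\mathbb{R})$. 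Hence $H^{i\gamma}-c_\gamma\big(-\tfrac{d^2}{dx^2}\big)^{i\gamma}$ is bounded on $L^p_\B(\mathbb{R})$ for every $\B$, whence $H^{i\gamma}$ is bounded on $L^p_\B(\mathbb{R})$ if and only if the model is, i.e.\ if and only if $\B$ is UMD. This is precisely the local/global comparison technique of \cite{BFRST}.

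The main obstacle is exactly this local kernel comparison. Using the explicit Mehler-type expression for $W_t^H(x,y)$ recorded above together with the Meda substitution $t=\log\frac{1+s}{1-s}$ already used in Proposition \ref{CZkernel}, I must extract the small-$s$ behaviour of $K_{\phi_\gamma}^H(x,y)=\int_0^\infty \phi_\gamma(t)\big(-\partial_t\big)W_t^H(x,y)\,dt$ near the diagonal and match its leading singular term with the homogeneous kernel $c_\gamma|x-y|^{-1-2i\gamma}$ of $c_\gamma\big(-\tfrac{d^2}{dx^2}\big)^{i\gamma}$, controlling the remainder by a locally integrable majorant uniformly in the local region. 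The oscillatory factor $t^{-i\gamma}$ has to be handled so that the two singular orders coincide, and the confining term (the $(x+y)^2$ contribution in the exponent), together with the restriction to the local region, must be shown to contribute only integrable errors. Once these estimates are secured the three equivalences close as above, and since (iii) is $p$--independent while the criterion of \cite{Guer} is itself stated for some/every $p$, the ``some (equivalently, for every)'' qualifier is preserved throughout.
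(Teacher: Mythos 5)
Your reduction of (ii)$\Leftrightarrow$(iii) to Lemma \ref{lema2} at $p=2$ via the Calder\'on--Zygmund $p$-independence of $H^{i\gamma}$, together with Lemma \ref{Lemap}, matches the paper. The gap is in your route to (i)$\Leftrightarrow$(iii). You propose to write $H^{i\gamma}-c_\gamma\big(-\tfrac{d^2}{dx^2}\big)^{i\gamma}$ as (near-diagonal difference) $+$ (far part of $H^{i\gamma}$) $-$ (far part of the model), and you assert that the far part of the model kernel $c_\gamma|x-y|^{-1-2i\gamma}$ yields an operator bounded on $L^p_\B(\mathbb{R})$ for \emph{every} Banach space $\B$. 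That is false: the truncation $f\mapsto\int_{|x-y|>\delta(x)}|x-y|^{-1-2i\gamma}f(y)\,dy$ is a (perturbation of a) convolution whose kernel is not integrable, and its Fourier multiplier near $\xi=0$ coincides with that of the full imaginary power $|\xi|^{2i\gamma}$ (up to constants depending on $\mathrm{sgn}\,\xi$); the low-frequency singularity is untouched by removing a neighbourhood of the diagonal, so this truncated operator characterizes UMD just as the untouched one does. Your decomposition therefore does not isolate the UMD-sensitive part. A second, related problem: ``dominated by a locally integrable function'' is not the right condition for the near-diagonal difference --- one needs uniform Schur bounds $\sup_x\int|D(x,y)|\,dy+\sup_y\int|D(x,y)|\,dx<\infty$, and the actual difference estimate (the paper proves the bound $C\max\{1,|x|\}$) is Schur-summable only over the \emph{shrinking} region $N_2=\{|x-y|\le 2/(1+|x|+|y|)\}$, not over a fixed band around the diagonal; on a fixed band the Hermite kernel has extra Gaussian decay $e^{-c|x-y|(|x|+|y|)}$ that the flat kernel lacks, and you are back to the problematic global truncation of the model.

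The paper's proof avoids exactly this trap, in two ways you would need to import. First, the kernel comparison with $L^{i\gamma}$ is carried out for $T^{\mathcal{O}}_{M_\gamma}$ in the Gaussian space $L^p_\B(\mathbb{R},e^{-x^2}dx)$, where the global part is dominated pointwise by $\sup_{t>0}W_t^{\mathcal{O}}(|\cdot|)$ off $N_1$ and hence is bounded for arbitrary $\B$ by Sj\"ogren's maximal theorem \cite{Sj}; this gives $L^{i\gamma}_{\rm loc}$ bounded on $L^p_\B(\mathbb{R},e^{-x^2}dx)$ if and only if $T^{\mathcal{O}}_{M_\gamma}$ is. Second --- and this is the step your plan has no substitute for --- to recover boundedness of the full $L^{i\gamma}$ on $L^p_\B(\mathbb{R})$ from boundedness of its localization, the paper does \emph{not} bound the global truncation of $L^{i\gamma}$; it exploits the dilation invariance $L^{i\gamma}(f_R)(x/R)=R^{2i\gamma}L^{i\gamma}(f)(x)$ (the argument of \cite[p.~21]{HTV}), zooming in so that the support of $f$ falls inside the local region $N_1$ and then letting $R\to\infty$. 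Since $H^{i\gamma}$ is not dilation invariant, a direct comparison of $H^{i\gamma}$ with the Laplacian on $L^p(\mathbb{R},dx)$ cannot use this device, which is why the detour through $\mathcal{O}$, Lemma \ref{Lemap} and Lemma \ref{lema2} is not cosmetic. Your outline would need to be restructured along these lines to close.
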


\begin{proof}
According to \cite{Guer}, $\B$ is a UMD space if and only if the  imaginary power $L^{i\gamma}$ can be extended to $L^p_\B(\mathbb{R})$ as a bounded operator from $L^p_\B(\mathbb{R})$ into itself for some (equivalently, for any) $1<p<\infty$ and for every $\gamma \in \mathbb{R}$, where $L=-\frac{1}{2}d^2/dx^2$.

\noindent $(i) \iff (ii)$. According to Lemma \ref{Lemap}, in order to prove this it is sufficient to show that if $\gamma \in \mathbb{R}$ and $1<p<\infty$ the following assertions are equivalent:

($i'$) $L^{i\gamma }$ can be extended to $L^p_\B(\mathbb{R})$ as a bounded operator from $L^p_\B(\mathbb{R})$ into itself.

($ii'$) $T_{M_\gamma}^\mathcal{O}$ can be extended to $L^p_\B(\mathbb{R},e^{-x^2}dx)$ as a bounded operator from $L^p_\B(\mathbb{R},e^{-x^2}dx)$ into itself, where $M_\gamma (\lambda )=(\lambda +1/2)^{i\gamma -1}\lambda$, $\lambda \in (0,\infty )$.

Let $1<p<\infty$ and $\gamma \in \mathbb{R}$. Assume that $L^{i\gamma}$ can be extended to $L^p_\B(\mathbb{R})$ as a bounded operator from $L^p_\B(\mathbb{R})$ into itself. We consider a smooth function $\Phi$ on $\mathbb{R}\times \mathbb{R}$ such that $\Phi (x,y)=1$, if $(x,y)\in N_1$; $\Phi (x,y)=0$, $(x,y) \notin N_2$; and
$\Big|\frac{\partial}{\partial x}\Phi(x,y)\Big|+\Big|\frac{\partial}{\partial y}\Phi(x,y)\Big|\leq \frac{C}{|x-y|},$ $x,y\in \mathbb{R}, \ x\not=y$ (see \cite[p. 288]{GCMST}). Here, for every $s>0$, $N_s$ represents the set
$$
N_s=\Big\{(x,y)\in \mathbb{R}^2: |x-y|\leq \frac{s}{1+|x|+|y|}\Big\}.
$$

We  define the operator
$$
L_{\rm loc}^{i\gamma }(g)(x)=L^{i\gamma }(\Phi (x,y)g(y))(x),\quad g\in L^p(\mathbb{R},e^{-x^2}dx)\otimes \B.
$$
The operator $L^{i\gamma }$ satisfies the conditions  \cite[(a), (b) and (c), p. 288]{GCMST}. Then, $L_{\rm loc}^{i\gamma }$ defines a bounded operator from $L^p_\B(\mathbb{R},e^{-x^2}dx)$ into itself (see \cite[Proposition 3.4]{GCMST}).

Also, we consider the operator
$$
T_{M_\gamma ,{\rm loc}}^\mathcal{O}(g)(x)=T_{M_\gamma }^\mathcal{O}(\Phi (x,y)g(y))(x),\quad g\in L^p(\mathbb{R},e^{-x^2}dx)\otimes \B.
$$
We are going to see that the operator $T_{M_\gamma ,{\rm loc}}^\mathcal{O}-L^{i\gamma }_{\rm loc}$ is bounded from $L^p_\B(\mathbb{R},e^{-x^2}dx)$ into itself.

For every $f\in L^p_\B(\mathbb{R},e^{-x^2}dx)$ we know that
\begin{align*}
T_{M_\gamma ,{\rm loc}}^\mathcal{O}(f)(x)-L^{i\gamma }_{\rm loc}(f)(x)&=\lim_{\varepsilon \rightarrow 0^+}\Big(\Lambda (\varepsilon )f(x)\\
&+\int_{|x-y|>\varepsilon }\Phi (x,y)(K_{\varphi_\gamma}
^\mathcal{O}(x,y)e^{-y^2}-K_{\phi_\gamma} (x,y))f(y)dy\Big),\quad
\mbox{a.e. }x\in \mathbb{R},
\end{align*}
for certain $\Lambda \in L^\infty (0,\infty )$, where
$\varphi_\gamma(t)=e^{-t/2}\phi_\gamma(t)$, $t>0$,
$$
K_{\phi_\gamma} (x,y)=\int_0^\infty {\phi_\gamma} (t)\Big(-\frac{\partial}{\partial t}\Big)W_t(x,y)dt,\quad x,y\in \mathbb{R}, \ x\not=y,
$$
and $W_t(x,y)=e^{-\frac{(x-y)^2}{2t}}/\sqrt{2\pi t}$ is the kernel of the heat semigroup associated to $L$.
By making some manipulations we obtain
\begin{align*}
 e^{-y^2} \frac{\partial}{\partial t} W_t^\mathcal{O}(x,y)&=-\frac{1}{\sqrt{\pi }}\frac{e^{-2t}}{(1-e^{-2t})^{3/2}}e^{-\frac{(e^{-t}x-y)^2}{1-e^{-2t}}}\\
&\quad +\frac{2}{\sqrt{\pi }}\frac{e^{-t}(e^{-t}x-y)(x-e^{-t}y)}{(1-e^{-2t})^{5/2}}e^{-\frac{(e^{-t}x-y)^2}{1-e^{-2t}}},\quad x,y\in \mathbb{R}\mbox{ and }t>0,
\end{align*}
and
$$
 \frac{\partial}{\partial t}W_t(x,y)=-\frac{1}{2\sqrt{2\pi }}\frac{e^{-\frac{(x-y)^2}{2t}}}{t^{3/2}}\left(1-\frac{(x-y)^2}{t}\right),\quad x,y\in \mathbb{R}\mbox{ and }t>0.
$$

Note firstly that,
\begin{align*}
\int_{1/|x|^2}^\infty \left|\frac{e^{-t}(e^{-t}x-y)(x-e^{-t}y)}{(1-e^{-2t})^{5/2}}\right|e^{-\frac{(e^{-t}x-y)^2}{1-e^{-2t}}}dt&\\
&\hspace{-6cm}= e^{-\frac{y^2-x^2}{2}}\int_{1/|x|^2}^\infty \left|\frac{e^{-t}(e^{-t}x-y)(x-e^{-t}y)}{(1-e^{-2t})^{5/2}}\right|e^{-\frac{(e^{-t}x-y)^2+(e^{-t}y-x)^2}{2(1-e^{-2t})}}dt\\
&\hspace{-6cm} \leq C e^{-\frac{y^2-x^2}{2}}\int_{1/|x|^2}^\infty \frac{e^{-t}}{(1-e^{-2t})^{3/2}}dt\\
&\hspace{-6cm}\leq Ce^{-\frac{y^2-x^2}{2}}\int_{1/|x|^2}^\infty  \frac{1}{t^{3/2}}dt\leq C|x|e^{-\frac{y^2-x^2}{2}},\quad x,y\in \mathbb{R}.
\end{align*}

Fix $M>0$. If $|x|\geq 1$ and $|x-y|\leq M/|x|$, then $|x|\sim |y|$ and
$$
|y^2-x^2|=|y-x||y+x|\leq \frac{M}{|x|}(|x|+|y|)\leq C.
$$
Also, if $|x|\leq 1$ and $|x-y|\leq M$, then $|y|\leq M+1$. Hence, we deduce that
\begin{align}\label{F1}
\int_{\min\{1,1/|x|^2\}}^\infty \left|\frac{e^{-t}(e^{-t}x-y)(x-e^{-t}y)}{(1-e^{-2t})^{5/2}}\right|e^{-\frac{(e^{-t}x-y)^2}{1-e^{-2t}}}dt
\leq C\max\{1,|x|\},
\end{align}
provided that  $x,y\in \mathbb{R}$ and $|x-y|\leq M\min\{1,1/|x|\}$.

We can write, for every $x,y\in \mathbb{R}$,
\begin{equation}\label{F2}
\int_{\min\{1,1/|x|^2\}}^\infty \frac{e^{-2t}}{(1-e^{-2t})^{3/2}}e^{-\frac{(e^{-t}x-y)^2}{1-e^{-2t}}}dt\leq C\int_{\min\{1,1/|x|^2\}}^\infty \frac{1}{t^{3/2}}dt\leq C\max\{1,|x|\},
\end{equation}
and
\begin{equation}\label{F3}
\int_{\min\{1,1/|x|^2\}}^\infty \left|\frac{\partial}{\partial t}W_t(x,y)\right|dt\leq C\int_{\min\{1,1/|x|^2\}}^\infty \frac{1}{t^{3/2}}dt\leq
C\max\{1,|x|\},\quad x,y\in \mathbb{R}.
\end{equation}

On the other hand, we have that
\begin{align}\label{F4}
\left|e^{-\frac{(e^{-t}x-y)^2}{1-e^{-2t}}}-e^{-\frac{(x-y)^2}{2t}}\right|&=e^{-\frac{(x-y)^2}{2t}}\left|e^{\frac{(x-y)^2}{2t}-\frac{(e^{-t}x-y)^2}{1-e^{-2t}}}-1\right|\nonumber\\
&\leq Ce^{-\frac{(x-y)^2}{2t}}\left|\frac{(x-y)^2}{2t}-\frac{(e^{-t}x-y)^2}{1-e^{-2t}}\right|\nonumber\\
&\leq Ce^{-\frac{(x-y)^2}{2t}}\left((x-y)^2\left|\frac{1}{2t}-\frac{1}{1-e^{-2t}}\right|+\frac{|(x-y)^2-(e^{-t}x-y)^2|}{1-e^{-2t}}\right)\nonumber\\
&\leq Ce^{-\frac{(x-y)^2}{2t}}\left((x-y)^2+\frac{(1-e^{-t})|x||x-y|+(1-e^{-t})^2x^2}{1-e^{-2t}}\right)\nonumber\\
&\leq Ce^{-\frac{(x-y)^2}{2t}}((x-y)^2+|x||x-y|+t|x|^2),\quad x,y\in \mathbb{R}, \ t>0.
\end{align}

By using \eqref{F4} and \cite[(B), p. 15]{HTV} we get
\begin{align*}
\left|\frac{e^{-t/2}e^{-2t}}{(1-e^{-2t})^{3/2}}e^{-\frac{(e^{-t}x-y)^2}{1-e^{-2t}}}-\frac{1}{(2t)^{3/2}}e^{-\frac{(x-y)^2}{2t}}\right|&\\
&\hspace{-5cm}\leq
\left|\frac{e^{-t/2}e^{-2t}}{(1-e^{-2t})^{3/2}}-\frac{1}{(2t)^{3/2}}\right|e^{-\frac{(e^{-t}x-y)^2}{1-e^{-2t}}}
+\frac{1}{(2t)^{3/2}}\left|e^{-\frac{(e^{-t}x-y)^2}{1-e^{-2t}}}-e^{-\frac{(x-y)^2}{2t}}\right|\\
&\hspace{-5cm}\leq Ce^{-c\frac{(x-y)^2}{t}}
\left(\frac{1}{t^{1/2}}+\frac{(x-y)^2+|x||x-y|+t|x|^2}{t^{3/2}}\right),\quad
(x,y)\in N_2, \ t>0.
\end{align*}
Hence, by using \cite[Lemma 1.1]{ST}, it follows that
\begin{align}\label{F5}
\int_0^{\min \{1,1/|x|^2\}}\left|\frac{e^{-t/2}e^{-2t}}{(1-e^{-2t})^{3/2}}e^{-\frac{(e^{-t}x-y)^2}{1-e^{-2t}}}-\frac{1}{(2t)^{3/2}}e^{-\frac{(x-y)^2}{2t}}\right|dt&\nonumber\\
&\hspace{-7cm}\leq C\int_0^{\min \{1,1/|x|^2\}}e^{-c\frac{(x-y)^2}{t}}\left(\frac{1}{t^{1/2}}+\frac{|x||x-y|}{t^{3/2}}+\frac{|x|^2}{t^{1/2}}\right)\leq C\max\{1,|x|\},\quad x,y\in \mathbb{R}.
\end{align}

Also we have that
\begin{align}\label{F6}
&\left|\frac{(x-y)^2}{(2t)^{5/2}}-\frac{e^{-t/2}e^{-t}(e^{-t}x-y)(x-e^{-t}y)}{(1-e^{-2t})^{5/2}}\right|\nonumber\\
& \quad \leq C\left(
\frac{|e^{-t}x-y||x-e^{-t}y|}{t^{3/2}}+\frac{|x||x-e^{-t}y|}{t^{3/2}}+\frac{|y||e^{-t}x-y|}{t^{3/2}}
+\left|\frac{1}{(2t)^{5/2}}-\frac{1}{(1-e^{-2t})^{5/2}}\right|(x-y)^2\right)\nonumber\\
&\quad \leq
\frac{C}{t^{3/2}}\left(|e^{-t}x-y||x-e^{-t}y|+|x||x-e^{-t}y|+|y||e^{-t}x-y|+(x-y)^2\right),\quad
x,y\in \mathbb{R}\mbox{ and } 0<t<1.
\end{align}

The estimations \eqref{F4}, \eqref{F6} and \cite[(B), p. 15]{HTV} lead to
\begin{align*}
&\left|\frac{e^{-t/2}e^{-t}(e^{-t}x-y)(x-e^{-t}y)}{(1-e^{-2t})^{5/2}}e^{-\frac{(e^{-t}x-y)^2}{1-e^{-2t}}}-\frac{(x-y)^2}{(2t)^{5/2}}e^{-\frac{(x-y)^2}{2t}}\right|\\
& \quad \leq C\left|\frac{e^{-t/2}e^{-t}(e^{-t}x-y)(x-e^{-t}y)}{(1-e^{-2t})^{5/2}}-\frac{(x-y)^2}{(2t)^{5/2}}\right|e^{-\frac{(e^{-t}x-y)^2}{1-e^{-2t}}}\\
&\qquad +\frac{(x-y)^2}{(2t)^{5/2}}\left|e^{-\frac{(e^{-t}x-y)^2}{1-e^{-2t}}}-e^{-\frac{(x-y)^2}{2t}}\right|\\
&\quad\leq\frac{C}{t^{3/2}}\left[(|e^{-t}x-y||x-e^{-t}y|+|x||x-e^{-t}y|+|y||e^{-t}x-y|)e^{-\frac{(x-e^{-t}y)^2+(e^{-t}x-y)^2}{2(1-e^{-2t})}-\frac{y^2-x^2}{2}}\right.\\
& \qquad\left.+(x-y)^2e^{-c\frac{(x-y)^2}{t}}+\frac{(x-y)^2}{t}e^{-\frac{(x-y)^2}{2t}}((x-y)^2+|x||x-y|+t|x|^2)\right]\\
& \quad \leq
C\left(\frac{1+|x|^2+|xy|}{t^{1/2}}+\frac{(|x|+|y|)|x-y|}{t^{3/2}}e^{-c\frac{(x-y)^2}{t}}\right),\quad
(x,y)\in N_2 \mbox{ and }0<t<1.
\end{align*}

Then, by using again \cite[Lemma 1.1]{ST} we obtain
\begin{align}\label{F7}
\int_0^{\min\{1,1/|x|^2\}} \left|\frac{e^{-t/2}e^{-t}(e^{-t}x-y)(x-e^{-t}y)}{(1-e^{-2t})^{5/2}}e^{-\frac{(e^{-t}x-y)^2}{1-e^{-2t}}}-\frac{(x-y)^2}{(2t)^{5/2}}e^{-\frac{(x-y)^2}{2t}}\right|\nonumber\\
&\hspace{-8cm} \leq C\int_0^{\min\{1,1/|x|^2\}} \left(\frac{1+x^2+|xy|}{t^{1/2}}+\frac{(|x|+|y|)|x-y|}{t^{3/2}}e^{-c\frac{(x-y)^2}{t}}\right)dt\nonumber\\
&\hspace{-8cm}\leq C\max\{1,|x|\}, \quad (x,y) \in N_2 \mbox{ and
}t>0.
\end{align}

By combining \eqref{F1}, \eqref{F2}, \eqref{F3}, \eqref{F5} and \eqref{F7}, since there exists $M>0$ such that $N_2\subset\{(x,y)\in \mathbb{R}^2:|x-y|\leq M\min\{1,\frac{1}{|x|}\}\}$,
$$
|\Phi(x,y)| \ |K_{\varphi_\gamma}^\mathcal{O}(x,y)e^{-y^2}-K_{\phi_\gamma} (x,y)|\leq
C\max\{1,|x|\}\chi _{N_2}(x,y),\quad x,y\in \mathbb{R}.
$$

It is not hard to see that
$$
\sup_{x\in \mathbb{R}}\max\{1,|x|\} \int_\mathbb{R}\chi _{N_2}(x,y)dy<\infty \quad \mbox{ and }\quad
\sup_{y\in \mathbb{R}}\int _\mathbb{R}\max\{1,|x|\}\chi _{N_2}(x,y)dx<\infty .
$$

According to \cite[Lemma 3.6]{GCMST}, $T_{M_\gamma ,{\rm loc}}^\mathcal{O}-L_{\rm loc}^{i\gamma }$ is a bounded operator from $L^p_\B(\mathbb{R},e^{-x^2}dx)$ into itself. Hence, $T_{M_\gamma ,{\rm loc}}^\mathcal{O}$ is bounded from $L^p_\B(\mathbb{R},e^{-x^2}dx)$ into itself.

On the other hand, by proceeding as in the proof of \cite[Theorem 3.8]{GCMST} we have
$$
\|T_{M_\gamma,{\rm glob}}^\mathcal{O}(f)(x)\|_\B\leq C\int_\mathbb{R} \sup_{t>0}|W_t^\mathcal{O}(x,y)|(1-\chi _{N_1}(x,y))\|f(y)\|_\B dy,
$$
and from \cite{Sj} (see also \cite{MPS}) we deduce that
$T_{M_\gamma ,{\rm glob}}^\mathcal{O}$ is bounded from
$L^p_\B(\mathbb{R},e^{-x^2}dx)$ into itself.

We conclude that $T_{M_\gamma }^\mathcal{O}$ can be extended to $L^p_\B(\mathbb{R},e^{-x^2}dx)$ as a bounded operator from $L^p_\B(\mathbb{R},$ $e^{-x^2}dx)$ into itself,  and $(i')\Longrightarrow (ii')$ is proved.

We are going to show that $(ii')\Longrightarrow (i')$. Let $\gamma \in \mathbb{R}$ and $1<p<\infty $. Assume that $T_{M_\gamma }^\mathcal{O}$ can be extended to $L^p_\B(\mathbb{R},e^{-x^2}dx)$ as a bounded operator from $L^p_\B(\mathbb{R},e^{-x^2}dx)$ into itself. The arguments developed above imply that the operator $L_{\rm loc} ^{i\gamma}$ is bounded from $L^p_\B(\mathbb{R},e^{-x^2}dx)$ into itself. According to \cite[Lemma 3.6]{GCMST} and \cite[Propositions 2.3 and 2.4]{HTV}, that also holds from Banach valued operators, $L_{\rm loc} ^{i\gamma}$ is bounded from $L^p_\B(\mathbb{R})$ into itself.

In order to see that $L^{i\gamma }$ is bounded from  $L^p_\B(\mathbb{R})$  into itself we use the ideas presented in \cite[p. 21]{HTV}. Let $f\in C_c^\infty  (\mathbb{R})\otimes \B$. For every $R>0$ we define $f_R(x)=f(Rx)$, $x\in\mathbb{R}$. Mote that, for every $R,\varepsilon >0$,
$$
\int_{|\frac{x}{R}-y|>\varepsilon }K_{\phi _\gamma }\Big(\frac{x}{R},y\Big)f_R(y)dy=R\int_{|\frac{x-z}{R}|>\varepsilon }K_{\phi _\gamma }\Big(\frac{x}{R},\frac{z}{R}\Big)f(z)dz,\quad x\in \mathbb{R},
$$
and, if $a>0$ there exists $R_a>0$ such that
$$
\left|\frac{x}{R}-\frac{z}{R}\right|\leq \frac{1}{1+|x|+|z|},\quad |x|\leq a, |z|\leq a\mbox{ and }R\geq R_a.
$$

Let $a>0$. There exits $R_a\in \mathbb{N}$ such that, for every $R\geq R_a$, $R\in \mathbb{N}$,
\begin{align*}
L^{i\gamma }(f_R)\Big(\frac{x}{R}\Big)&=\lim_{\varepsilon \rightarrow 0^+}\left(\Lambda (\varepsilon )f_R\Big(\frac{x}{R}\Big)+\int_{|\frac{x}{R}-y|>\varepsilon }
K_{\phi _\gamma }\Big(\frac{x}{R},y\Big)\Phi \Big(\frac{x}{R},y\Big)f_R(y)dy\right)\\
&=L^{i\gamma }_{\rm loc}(f_R)\Big(\frac{x}{R}\Big),\quad \mbox{ a.e. }x\in (0,a),
\end{align*}
for a certain $\Lambda\in L^\infty(0,\infty)$.

Moreover, by extending the Fourier transformation to $C_c^\infty (\mathbb{R})\otimes \B$ in a natural way, we can write, for every $R>0$,
\begin{align*}
L^{i\gamma }(f_R)\Big(\frac{x}{R}\Big)=(|y|^{2i\gamma} \hat{f_R}(y))^\vee\Big(\frac{x}{R}\Big)=R(|Ry|^{2i\gamma }\hat{f_R}(Ry))^\vee(x)
=R^{2i\gamma}L^{i\gamma }(f)(x),\quad \mbox{ a.e. }x\in (0,a).
\end{align*}
Hence, we have that
\begin{align*}
\int_{-a}^a\|L^{i\gamma }(f)(x)\|_\B^pdx
    &=\int_{-a}^a\|L^{i\gamma }(f_R)\Big(\frac{x}{R}\Big)\|_\B^pdx
     =\int_{-a}^a\|L_{\rm loc}^{i\gamma }(f_R)\Big(\frac{x}{R}\Big)\|_\B^pdx\\
    &\leq R\int_{-\infty}^{+\infty}\|L_{\rm loc}^{i\gamma }(f_R)(x)\|_\B^pdx\leq CR\int_{-\infty }^{+\infty }\|f(Ry)\|_\B^pdy
    \leq C\int_{-\infty }^{+\infty}\|f(y)\|_\B^pdy.
\end{align*}
By taking $a\rightarrow\infty $ we conclude that $L^{i\gamma }$ can be extended to $L_\B^p(\mathbb{R})$ as a bounded operator from $L_\B^p(\mathbb{R})$ into itself, and $(i')$ is proved.

$(ii)\iff (iii)$. Assume that the operator $H^{i\gamma}$ can be extended to $L_\B^p(\mathbb{R})$ as a bounded operator from $L_\B^p(\mathbb{R})$ into itself, for some $1<p<\infty$. Then, since $K_\phi ^H$ is  a Calder\'on-Zygmund kernel (Proposition~\ref{CZkernel}), it follows that $H^{i\gamma}$ can be extended to $L_\B^q(\mathbb{R})$ as a bounded operator from $L_\B^q(\mathbb{R})$ into itself, for every $1<q<\infty$. Hence, the equivalence $(ii)\iff (iii)$ follows from Lemma \ref{lema2}.

\end{proof}

\section{Proof of the results in the Laguerre settings}

Let $\alpha >-1/2$. We consider the Laguerre differential operator $\widetilde{L}_\alpha$ given by
$$
\widetilde{L}_\alpha =-\frac{1}{2}\left(\frac{d^2}{dx^2}-x^2-\frac{\alpha ^2-1/4}{x^2}\right),\quad x\in (0,\infty ).
$$
Fore every $k\in \mathbb{N}$, we have that $\widetilde{L}_\alpha \varphi_k^\alpha=\lambda_k^\alpha\varphi_k^\alpha$, where $\lambda_k^\alpha=2k+\alpha+1$ and $\varphi_k^\alpha$ denotes the $k$-th Laguerre function defined by
$$
\varphi _k^\alpha (x)=\left(\frac{2\Gamma (k+1)}{\Gamma (k+\alpha +1)}\right)^{1/2}e^{-x^2/2}x^{\alpha +1/2}L_k^\alpha (x^2),\quad x\in (0,\infty ),
$$
$L_k^\alpha$ being the $k$-th Laguerre polynomial of type $\alpha$ (see \cite[p. 100]{Szeg} and \cite[p. 7]{Than}). The system $\{\varphi _k^\alpha \}_{k\in \mathbb{N}}$ is an orthonormal basis for $L^2(0,\infty )$. The operator $L_\alpha$ is defined by \eqref{A1.1}. Note that if $f\in C_c^\infty(0,\infty)$, the space of the smooth functions with compact support in $(0,\infty)$, then $L_\alpha f=\widetilde{L}_\alpha f$.

The heat semigroup $\{W_t^{L_\alpha}\}_{t>0}$ generated by $-L_\alpha$ is defined in $L^2(0,\infty)$ by \eqref{A2}. By using the Mehler formula (\cite[p. 8]{Than}), for every $t>0$ and $1\le p\le\infty$, the operator $W_t^{L_\alpha}$ can be extended to $L^p(0,\infty)$ as a bounded operator from $L^p(0,\infty)$ into itself defining, for each $f\in L^p(0,\infty)$,
$$
W_t^{L_\alpha} (f)(x)=\int_0^\infty W_t^{L_\alpha} (x,y)f(y)dy,\quad x\in (0,\infty ),
$$
where
$$
W^{L_\alpha}_{t}(x,y)=\Biggl(\frac{2e^{-t}}{1-e^{-2t}}\Biggr)^{1/2}\Biggl(\frac{2xye^{-t}}{1-e^{-2t}}\Biggr)^{1/2}
I_{\alpha}\Biggl(\frac{2xye^{-t}}{1-e^{-2t}}\Biggr)e^{-\frac{1}{2}(x^{2}+y^{2})\frac{1+e^{-2t}}{1-e^{-2t}}}.
$$
Here $I_{\alpha}$ denotes the modified Bessel function of the
first kind and order $\alpha$.

As it was mentioned in the introduction, in order to prove Theorems \ref{Principalvalue} and \ref{UMD} for the Laguerre operators we exploit some connections between Hermite and Laguerre settings. This link is shown in the property $(d)$ of the following proposition. Next, we establish estimates for the kernels $K_\phi^H$ and $K_\phi^{L_\alpha}$ that are crucial in the proof of the results.

\begin{Prop}\label{kernels}
Let $\phi \in L^\infty (0,\infty )$. Then, there exists $C>0$ such that

(a) $|K_\phi ^H(x,y)|\leq C\frac{\|\phi\|_{L^\infty (0,\infty )}}{\max \{x,y\}}$, $0<y<\frac{x}{2}$ or $0<2x<y$.

(b) $|K_\phi ^H(x,y)|\leq C\frac{\|\phi\|_{L^\infty (0,\infty )}}{|x|+|y|}$, $xy<0$.

(c) $|K_\phi ^{L_\alpha} (x,y)|\leq C\|\phi\|_{L^\infty (0,\infty )}\frac{(\min\{x,y\})^{\alpha +1/2}}{(\max \{x,y\})^{\alpha+3/2}}$, $0<y<\frac{x}{2}$ or $0<2x<y$.

(d)  $|K_\phi ^{L_\alpha} (x,y)-K_\phi ^H(x,y)|\leq C\frac{\|\phi\|_{L^\infty (0,\infty )}}{x}\left(1+\sqrt{\frac{x}{|y-x|}}\right)$, $0<\frac{x}{2}<y<2x$.
\end{Prop}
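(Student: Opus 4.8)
The plan is to prove (a) and (b) directly from the Calder\'on--Zygmund bound of Proposition~\ref{CZkernel}, and to obtain (c) and (d) by genuine computation with $W_t^{L_\alpha}$ and the asymptotics of the Bessel function $I_\alpha$. For (a) and (b) I would only use the geometry of the regions together with \eqref{CZ1}, whose proof in fact gives the quantitative bound $|K_\phi^H(x,y)|\le C\|\phi\|_{L^\infty(0,\infty)}/|x-y|$ on all of $\mathbb{R}^2$. If $0<y<x/2$ then $|x-y|=x-y>x/2=\tfrac12\max\{x,y\}$, and symmetrically $|x-y|>\tfrac12\max\{x,y\}$ if $0<2x<y$; hence $|K_\phi^H(x,y)|\le 2C\|\phi\|_\infty/\max\{x,y\}$, which is (a). If $xy<0$ the points lie on opposite half-lines, so $|x-y|=|x|+|y|$ and \eqref{CZ1} gives (b) at once.

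For (c) and (d), since $\phi\in L^\infty(0,\infty)$ it suffices to bound the $dt$-integrals, because $|K_\phi^{L_\alpha}(x,y)|\le\|\phi\|_\infty\int_0^\infty|\partial_t W_t^{L_\alpha}(x,y)|\,dt$ and $|K_\phi^{L_\alpha}(x,y)-K_\phi^H(x,y)|\le\|\phi\|_\infty\int_0^\infty|\partial_t(W_t^{L_\alpha}-W_t^H)(x,y)|\,dt$. I would write $u=u(t)=\frac{2xye^{-t}}{1-e^{-2t}}$ and $v=v(t)=\frac{1+e^{-2t}}{1-e^{-2t}}$, so that $W_t^{L_\alpha}(x,y)=\big(\tfrac{2e^{-t}}{1-e^{-2t}}\big)^{1/2}\sqrt{u}\,I_\alpha(u)\,e^{-\frac12(x^2+y^2)v}$, and systematically apply Meda's change of variables $t=\log\frac{1+s}{1-s}$. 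Under it $1-e^{-2t}=\frac{4s}{(1+s)^2}$, $u=\frac{xy(1-s^2)}{2s}$, and a short computation shows that the combined exponent satisfies $u-\tfrac12(x^2+y^2)v=-\frac{(x-y)^2}{4s}-\frac{s(x+y)^2}{4}$, i.e.\ it is exactly the Hermite Gaussian exponent written in the variable $s$.

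To prove (c) I would insert the two-sided Bessel estimates $I_\alpha(z)\le Cz^\alpha$ for $0<z\le1$ and $I_\alpha(z)\le Ce^z z^{-1/2}$ for $z\ge1$, and split the $s$-integral at the point where $u\sim1$. In the off-diagonal range $0<y<x/2$ (or $0<2x<y$) one has $|x-y|\sim\max\{x,y\}$, so the large-argument part ($u\ge1$, i.e.\ small $s$) is killed by the Gaussian $e^{-(x-y)^2/4s}$ and is negligible; the dominant contribution comes from the small-argument part, where $\sqrt{u}\,I_\alpha(u)\sim u^{\alpha+1/2}\sim(xy/s)^{\alpha+1/2}$. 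After differentiation in $t$ this leads to integrals of the type $(xy)^{\alpha+1/2}\int s^{-\alpha-2}e^{-(\max\{x,y\})^2/4s}\,ds\sim(xy)^{\alpha+1/2}(\max\{x,y\})^{-2\alpha-2}$, which is precisely $(\min\{x,y\})^{\alpha+1/2}/(\max\{x,y\})^{\alpha+3/2}$, giving (c).

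The heart of the matter, and the step I expect to fight with, is (d). Here I would use the refined asymptotic $\sqrt{z}\,I_\alpha(z)=\frac{e^z}{\sqrt{2\pi}}\big(1+\Psi_\alpha(z)\big)$ with $|\Psi_\alpha(z)|\le C/z$ and $|\Psi_\alpha'(z)|\le C/z^2$ for $z\ge c_0$. The algebraic key, already recorded in the exponent identity above, is that the leading term reproduces the Hermite kernel exactly, $\big(\tfrac{2e^{-t}}{1-e^{-2t}}\big)^{1/2}\frac{e^u}{\sqrt{2\pi}}e^{-\frac12(x^2+y^2)v}=W_t^H(x,y)$; hence for $u\ge c_0$ one gets the clean identity $W_t^{L_\alpha}(x,y)-W_t^H(x,y)=W_t^H(x,y)\,\Psi_\alpha(u)$. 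Differentiating produces two pieces, $(\partial_t W_t^H)\Psi_\alpha(u)$ and $W_t^H\,\Psi_\alpha'(u)\,\partial_t u$, each of which I would control by combining the Hermite bounds behind Proposition~\ref{CZkernel} with $|\Psi_\alpha(u)|\le C/u$ and $|\Psi_\alpha'(u)|\le C/u^2$; in the local region $y\sim x$ the factor $\Psi_\alpha(u)=O(1/u)=O(s/x^2)$ supplies the crucial extra power of $s$, which after Meda's substitution weakens the diagonal singularity of the difference relative to that of $K_\phi^H$. The complementary range $u<c_0$ (which occurs only for $s$ of order $xy$, hence for small $x$ or large $t$) I would treat with the crude small-argument bounds, where both $W_t^{L_\alpha}$ and $W_t^H$ decay and contribute at most $C/x$. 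Assembling the pieces and carrying out the resulting elementary $s$-integrals yields a bound of the announced form $\frac{C\|\phi\|_\infty}{x}\big(1+\sqrt{x/|y-x|}\big)$ on $0<x/2<y<2x$. The two delicate points are: (i) securing the uniformity of the remainder estimates for $\Psi_\alpha$ and $\Psi_\alpha'$ across the transition $u\sim c_0$, where neither Bessel asymptotic is sharp; and (ii) keeping track of the Gaussian factor $e^{-(x-y)^2/4s}$ precisely enough through the change of variables so that the residual near-diagonal singularity is no worse than $|y-x|^{-1/2}$, which is exactly the gain needed to transfer the Hermite results to the Laguerre setting.
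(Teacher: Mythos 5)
Your proposal is correct and follows essentially the same route as the paper: (a) and (b) are read off from the Calder\'on--Zygmund bound \eqref{CZ1} together with $|x-y|\sim\max\{x,y\}$ (resp.\ $|x-y|=|x|+|y|$), while (c) and (d) are obtained by splitting according to the size of the Bessel argument $\frac{2xye^{-t}}{1-e^{-2t}}$, using the small- and large-argument asymptotics of $I_\alpha$, Meda's change of variables, and the key identity that the leading large-argument term of $W_t^{L_\alpha}$ reproduces $W_t^H$ exactly. The only difference is that the paper imports the resulting pointwise bounds on $\partial_t W_t^{L_\alpha}$, $\partial_t W_t^H$ and their difference from \cite[(3.4)--(3.6) and (3.13)]{BMoR} rather than re-deriving them from the Bessel remainder $\Psi_\alpha$ as you propose to do.
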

\begin{proof}
(a) We  have only to take into account \eqref{CZ1} and observe that $|x-y|\sim y$, when $y>2x>0$, and that $|x-y|\sim x$, provided that $0<y<\frac{x}{2}$.

(b) It is a direct consequence of  \eqref{CZ1}.

(c) Since $K_\phi ^{L_\alpha} (x,y)=K_\phi ^{L_\alpha} (y,x)$, $x,y\in (0,\infty )$, it is sufficient to establish (c) when $0<y<\frac{x}{2}$.

Assume $0<y<\frac{x}{2}$, and denote by $L(x,y)$ and $R(x,y)$ the following sets:
$$
L(x,y)=\left\{t>0: \frac{e^{-t}xy}{1-e^{-2t}}\leq 1\right\}, \quad
R(x,y)=\left\{t>0: \frac{e^{-t}xy}{1-e^{-2t}}\geq 1\right\}.
$$
These regions play an important role when estimating the modified Bessel function $I_\alpha$.

By using \cite[(3.4) and (3.5)]{BMoR} the asymptotics for $I_\alpha$ allow us to write
\begin{align}\label{K}
|K_\phi ^{L_\alpha} (x,y)|&\leq \|\phi\|_{L^\infty (0,\infty )}\left(\int_{L(x,y)}+\int_{R(x,y)}\right)\left|\frac{\partial }{\partial t}W_t^{L_\alpha} (x,y)\right|dt\nonumber\\
&\leq C\left((xy)^{\alpha +1/2}\int_0^\infty \frac{e^{-\frac{x^2}{8t}}e^{-(\alpha +1)t}}{(1-e^{-2t})^{\alpha +2}}dt+x^2\int_{R(x,y)}\frac{e^{-\frac{3t}{2}}e^{-\frac{(x-e^{-t}y)^2+(y-e^{-t}x)^2}{2(1-e^{-2t})}}}{(1-e^{-2t})^{5/2}}
dt\right).
\end{align}

To estimate the first integral we observe that $t\sim 1-e^{-2t}$, as $t\rightarrow 0$; that, for certain $c>0$, $1-e^{-2t}\geq c$, when $t>1$; and that, as it was mentioned before, for every $a>0$ and $b\geq 0$, there exists $c>0$ for which $u^be^{-au}\leq c$, $u>0$. Thus, by using again \cite[Lemma 1.1]{ST},
\begin{equation}\label{2.1.a}
\int_0^\infty \frac{e^{-\frac{x^2}{8t}}e^{-(\alpha +1)t}}{(1-e^{-2t})^{\alpha +2}}dt\leq C\left(\int_0^1\frac{e^{-\frac{x^2}{8t}}}{t^{\alpha +2}}dt+\frac{1}{x^{2\alpha +2}}\int_1^\infty e^{-(\alpha +1)t}t^{\alpha +1}dt\right)\leq \frac{C}{x^{2\alpha +2}}.
\end{equation}

On the other hand, by making the change of variables $t=\log(\frac{1+s}{1-s})$ in the second integral of \eqref{K} and taking into account that $\alpha +\frac{1}{2}>0$ we get
\begin{eqnarray*}
\int_{R(x,y)}\frac{e^{-\frac{3t}{2}}e^{-\frac{(x-e^{-t}y)^2+(y-e^{-t}x)^2}{2(1-e^{-2t})}}}{(1-e^{-2t})^{5/2}}dt&\leq& C\int_{0,\frac{(1-s^2)xy}{4s}\geq 1}^1\frac{e^{-\frac{(x-y)^2}{4s}}(1-s)^{1/2}}{s^{5/2}}ds\\
&\hspace{-4cm}\leq&\hspace{-2cm}C(xy)^{\alpha +1/2}\int_0^1\frac{e^{-\frac{(x-y)^2}{4s}}}{s^{\alpha +3}}ds\leq C\frac{(xy)^{\alpha +1/2}}{|x-y|^{2\alpha +4}}\leq C\frac{y^{\alpha+1/2}}{x^{\alpha +7/2}}.
\end{eqnarray*}

Here we have again made use of \cite[Lemma 1.1]{ST} and that $|x-y|\sim x$, when $0<y<\frac{x}{2}$.

(d) Let $0<\frac{x}{2}<y<2x$, and $L(x,y)$ and $R(x,y)$ as before. By \cite[(3.4),(3.6) and (3.13)]{BMoR} we can write
\begin{align*}
|K_\phi ^{L_\alpha} (x,y)-K_\phi ^H(x,y)|\leq &\|\phi \|_{L^\infty (0,\infty )}\left(\int_{L(x,y)}\left(\left|\frac{\partial }{\partial t}W_t^{L_\alpha} (x,y)\right|+\left|\frac{\partial }{\partial t}W_t^H(x,y)\right|\right)dt  \right.\\
& +\left.\int_{R(x,y)}\left|\frac{\partial }{\partial t}W_t^{L_\alpha} (x,y)-\frac{\partial }{\partial t}W_t ^H(x,y)\right|dt\right)\\
\leq& C\|\phi \|_{L^\infty (0,\infty )}\left((xy)^{\alpha +1/2}\int_0^\infty \frac{e^{-\frac{x^2}{8t}}e^{-(\alpha +1)t}}{(1-e^{-2t})^{\alpha +2}}dt
+\int_0^\infty \frac{e^{-\frac{x^2}{8t}}e^{-t/2}}{(1-e^{-2t})^{3/2}}dt\right.\\
& +\left.\int_{R(x,y)}\frac{e^{-\frac{(x-y)^2}{2t}}e^{t/2}}{xy\sqrt{1-e^{-t}}}dt\right).
\end{align*}

The two first integrals can be estimated by proceeding as in (c) (see \eqref{2.1.a}). Observe that the second one is exactly the first one for $\alpha =-1/2$. On the other  hand,
\begin{align*}
\int_{R(x,y)}\frac{e^{-\frac{(x-y)^2}{2t}}e^{t/2}}{xy\sqrt{1-e^{-2t}}}dt\leq &\frac{C}{(xy)^{1/4}}\int_{R(x,y)}\frac{e^{-\frac{(x-y)^2}{2t}}e^{-t/4}}{(1-e^{-2t})^{5/4}}dt\\
\leq &\frac{C}{\sqrt{x}}\left(\int_0^1\frac{e^{-\frac{(x-y)^2}{2t}}}{t^{5/4}}dt+\int_1^\infty \frac{e^{-t/4}t^{1/4}}{\sqrt{|x-y|}}dt\right)\\
\leq &\frac{C}{\sqrt{x|x-y|}}=\frac{C}{x}\sqrt{\frac{x}{|x-y|}},\quad x\not =y.
\end{align*}
Thus (d) is established.
\end{proof}

\subsection{Proof of Theorem \ref{Principalvalue} for Laguerre operators}

Let $f,g\in C_c^\infty (0,\infty )$. The spectral multiplier $T_m^{L_\alpha}$ is a bounded operator on $L^2(0,\infty)$. Then, we can write
$$
\langle T_m^{L_\alpha} (f),g\rangle _{L^2(0,\infty)}=\sum_{k=0}^\infty m(\lambda _k^\alpha )c_k^\alpha (f)\langle \varphi _k^\alpha ,g\rangle _{L^2(0,\infty )}=\sum_{k=0}^\infty \lambda _k^\alpha c_k^\alpha (f)\overline{c_k^\alpha (g)}\int_0^\infty e^{-\lambda _k^\alpha t}\phi (t)dt,
$$
where $\displaystyle c_k^\alpha(h)=\int_0^\infty \varphi_k^\alpha(x)h(x)dx$, $h\in L^2(0,\infty)$ and $k\in \mathbb{N}$. Since
\begin{align*}
\int_0^\infty \Big|\phi (t)\sum_{k=0}^\infty \lambda _k^\alpha e^{-\lambda _k^\alpha t}c_k^\alpha (f)\overline{c_k^\alpha (g)}\Big|dt \leq&
\|\phi \|_{L^\infty(0,\infty )}\sum_{k=0}^\infty |c_k^\alpha (f)||c_k^\alpha (g)|\int_0^\infty \lambda _k^\alpha e^{-\lambda _k^\alpha t}dt\\
\leq&\|\phi \|_{L^\infty(0,\infty )}\|f\|_{L^2(0,\infty )}\|g\|_{L^2(0,\infty )},
\end{align*}
we obtain
\begin{align*}
\langle T_m^{L_\alpha} (f),g\rangle _{L^2(0,\infty )}=&\int_0^\infty \phi (t)\sum_{k=0}^\infty \lambda_k^\alpha e^{-\lambda _k^\alpha t}c_k^\alpha (f)\overline{c_k^\alpha (g)}dt\\
=&\int_0^\infty \phi (t)\Big(-\frac{d}{dt}\Big)\sum_{k=0}^\infty e^{-\lambda _k^\alpha t}c_k^\alpha (f)\overline{c_k^\alpha (g)}dt.
\end{align*}
To justify the last equality let us denote by $\Phi (t)=\sum_{k=0}^\infty e^{-\lambda _k^\alpha t}c_k^\alpha (f)\overline{c_k^\alpha (g)}$, $t>0$. Observe that, for every $t>0$,
$$
|\Phi (t)|\leq \sum_{k=0}^\infty |c_k^\alpha (f)||c_k^\alpha (g)|\leq \|f\|_{L^2(0,\infty )}\|g\|_{L^2(0,\infty )}<\infty .
$$
Let $t>0$. For every $|h|<\frac{t}{2}$ we get that
\begin{eqnarray*}
\left|\frac{\Phi (t+h)-\Phi (t)}{h}+\sum_{k=0}^\infty \lambda _k^\alpha e^{-\lambda _k^\alpha t}c_k^\alpha (f)\overline{c_k^\alpha (g)}\right|&&\\
&\hspace{-8cm}\leq&\hspace{-4cm}\sum_{k=0}^\infty \left(\frac{e^{-\lambda _k^\alpha h}-1}{h}+\lambda _k^\alpha \right)e^{-\lambda _k^\alpha t}|c_k^\alpha (f)||c_k^\alpha (g)|\\
&\hspace{-8cm}\leq&\hspace{-4cm}|h|\sum_{k=0}^\infty(\lambda_k^\alpha )^2e^{-\lambda _k^\alpha (t-|h|)}|c_k^\alpha (f)||c_k^\alpha (g)|\\
&\hspace{-8cm}\leq&\hspace{-4cm}C\frac{|h|}{(t-|h|)^2}\sum_{k=0}^\infty|c_k^\alpha (f)||c_k^\alpha (g)|\leq C\frac{|h|}{t^2} \|f\|_{L^2(0,\infty )}\|g\|_{L^2(0,\infty )},
\end{eqnarray*}
which leads to
$$
\sum_{k=0}^\infty \lambda _k^\alpha e^{-\lambda _k^\alpha t}c_k^\alpha (f)\overline{c_k^\alpha (g)}=\Big(-\frac{d}{dt}\Big)\sum_{k=0}^\infty e^{-\lambda _k^\alpha t}c_k^\alpha (f)\overline{c_k^\alpha (g)}.
$$
Thus we obtain that
$$
\langle T_m^{L_\alpha} (f),g\rangle _{L^2(0,\infty)}=\int_0^\infty \phi (t)\Big(-\frac{d}{dt}\Big)\langle W_t^{L_\alpha} (f),g\rangle _{L^2(0,\infty )}dt.
$$

From now on, if $F$ is a function defined on $(0,\infty )$, we denote by $\widetilde{F}$ the function on $\mathbb{R}$ given by $\widetilde{F}(x)=F(x)$, $x>0$, and $\widetilde{F}(x)=0$, $x\leq 0$. Then, we write
\begin{align*}
\langle T_m^{L_\alpha} (f),g\rangle _{L^2(0,\infty)}=&\int_0^\infty \phi (t)\Big(-\frac{d}{dt}\Big)\langle W_t^{L_\alpha} (f)-W_{t,+}^H(f),g\rangle _{L^2(0,\infty)}dt\\
+&\int_0^\infty \phi (t)\Big(-\frac{d}{dt}\Big)\langle W_{t,+}^H(f),g\rangle _{L^2(0,\infty)}dt,
\end{align*}
where $W_{t,+}^H(f)=W_t^H(\tilde{f})$. Note that $\langle W_{t,+}^H(f),g\rangle _{L^2(0,\infty)}=\langle W_t^H(\tilde{f}),\tilde{g}\rangle _{L^2(\mathbb{R})}$. Then
\begin{align*}
\langle T_m^{L_\alpha} (f),g\rangle _{L^2(0,\infty)}&=\int_0^\infty \phi (t)\Big(-\frac{d}{dt}\Big)\langle W_t^{L_\alpha} (f)-W_{t,+}^H(f),g\rangle _{L^2(0,\infty )}dt\\
&+\int_0^\infty \phi (t)\Big (-\frac{d}{dt}\Big)\langle W_t^H(\tilde{f}),\tilde{g}\rangle _{L^2(\mathbb{R})}dt.
\end{align*}
By \eqref{TmHprincipalvalue} we can find $\Lambda \in L^\infty (0,\infty)$ such that
\begin{align}\label{A1}
\langle T_m^{L_\alpha} (f),g\rangle _{L^2(0,\infty)}=&\int_0^\infty \phi (t)\Big(-\frac{d}{dt}\Big)\langle W_t^{L_\alpha} (f)-W_{t,+}^H(f),g\rangle _{L^2(0,\infty)}dt\nonumber\\
+&\Big\langle \lim_{\varepsilon \rightarrow 0^+}\Big(\Lambda (\varepsilon )\tilde{f}(x)+\int_{|x-y|>\varepsilon }\tilde{f}(y)K_\phi ^H(x,y)dy\Big),\tilde{g}\Big\rangle _{L^2(\mathbb{R})}.
\end{align}
Moreover, is there exists the limit $\lim_{t\to 0^+}\phi(t)=\phi(0^+)$, then $\lim_{\varepsilon\to 0^+}\Lambda(\varepsilon)=\phi(0^+)$.

Since $f\in C_c^\infty (0,\infty )$, by using partial integration and by taking into account well known properties of Hermite and Laguerre functions (\cite{Than}), we can show that, for every $k\in \mathbb{N}$ there exists $C_k>0$ such that
$$
|c_m^\alpha (f)|\leq \frac{C_k}{(m+1)^k}\quad \mbox{ and }\quad |c_m(\tilde{f})|\leq \frac{C_k}{(m+1)^k},\quad m\in \mathbb{N},
$$
where $\displaystyle c_m(f)=\int_\mathbb{R} h_m(x) f(x) dx$, $m\in
\mathbb{N}$. We deduce that
\begin{align*}
\int_0^\infty & |\phi (t)|\int_0^\infty \left|\frac{\partial}{\partial t}(W_t^{L_\alpha} (f)(x)-W_t^H(\tilde{f})(x))g(x)\right|dxdt\\
&\leq C\|g\|_{L^2(0,\infty)}\int_0^\infty \left[\left(\int_0^\infty \left|\frac{\partial}{\partial t}(W_t^{L_\alpha} (f)(x))\right|^2dx\right)^{1/2}+
\left(\int_0^\infty \left|\frac{\partial}{\partial t}(W_t^H(\tilde{f})(x))\right|^2dx\right)^{1/2}\right]dt\\
&\leq C\int_0^\infty \left[\left(\sum_{m=0}^\infty e^{-2\lambda _m^\alpha t} (\lambda _m^\alpha )^2|c_m^\alpha (f)|^2\right)^{1/2}+\left(\sum_{m=0}^\infty e^{-2\lambda _mt}(\lambda _m)^2|c_m(\tilde{f})|^2\right)^{1/2}\right]dt\\
&\leq C\int_0^\infty (e^{-\lambda _0^\alpha t}+e^{-\lambda _0t})dt<\infty .
\end{align*}
Hence, by interchanging the order of integration we get
\begin{align*}
\int_0^\infty \phi (t)\frac{d}{dt}\langle W_t^{L_\alpha} (f)-W_{t,+}(f),g\rangle_{L^2(0,\infty)}dt&\\
&\hspace{-6cm}= \Big\langle \int_0^\infty \phi
(t)\frac{\partial}{\partial t}(W_t^{L_\alpha} (f)(x)-W_{t,+}(f)(x))dt,g(x)\Big
\rangle _{L^2(0,\infty)}.
\end{align*}
Moreover, according to Proposition \ref{kernels}, it follows that
\begin{align*}
\int_0^\infty |\phi (t)|\int_0^\infty \left|\frac{\partial}{\partial t}(W_t^{L_\alpha} (x,y)-W_t^H(x,y))\right| |f(y)| dy&\\
&\hspace{-8cm}\leq
C\left(\int_{\frac{x}{2}}^{2x}\frac{1}{x}\left(1+\sqrt{\frac{x}{|y-x|}}\right)|f(y)|
dy+\frac{1}{x}\int_0^{\frac{x}{2}}|f(y)|dy+\int_{2x}^\infty
\frac{|f(y)|}{y}dy\right)\leq C,\quad x\in (0,\infty ),
\end{align*}
because $f\in C_c^\infty (0,\infty )$.

On the other hand, since $T_m^{L_\alpha} (f)\in L^2(0,\infty)$ and $T_m^H(\tilde{f})\in L^2(\mathbb{R})$, from \eqref{A1} we deduce that
\begin{align}\label{FF}
T_m^{L_\alpha} (f)(x)=&\int_0^\infty \phi (t)\Big(-\frac{\partial }{\partial t}\Big)[W_t^{L_\alpha} (f)(x)-W_{t,+}^H(f)(x)]dt\nonumber\\
&+\lim_{\varepsilon \rightarrow 0^+}\left(\Lambda (\varepsilon )f(x)+\int_{0,|x-y|>\varepsilon }^\infty  K_\phi^H(x,y) f(y) dy\right)\nonumber\\
=&\lim_{\varepsilon \rightarrow 0^+}\left(\Lambda (\varepsilon
)f(x)+\int_{0,|x-y|>\varepsilon }^\infty  K_\phi ^{L_\alpha}
(x,y)f(y) dy\right),\quad \mbox{a.e. }x\in (0,\infty ).
\end{align}
Thus \eqref{Tmexpression} and \eqref{Tmexpression2} are established for the Laguerre operator $L_\alpha$ and $f \in C_c^\infty(0,\infty)$.

Let us now  consider the maximal operator $T_m^{*,L_\alpha}$ defined by
$$
T_m^{*,L_\alpha}(f)(x)=\sup_{\varepsilon >0}\left|\Lambda
(\varepsilon)f(x)+\int_{0,|x-y|>\varepsilon }^\infty  K_\phi
^{L_\alpha} (x,y)f(y)dy\right|,\quad x\in (0,\infty ).
$$

Our objective now is to establish that $T_m^{*,L_\alpha}$ is a
bounded operator from $L^p(0,\infty)$ into itself, when
$1<p<\infty$ and from $L^1(0,\infty)$ into $L^{1,\infty
}(0,\infty)$. Since $\Lambda $ is a bounded function it is
sufficient to establish the result for
$$
\mathcal{T}_m^{*,L_\alpha}(f)(x)=\sup_{\varepsilon
>0}\left|\int_{0,|x-y|>\varepsilon }^\infty K_\phi ^{L_\alpha}
(x,y) f(y)dy\right|,\quad x\in (0,\infty ).
$$

By taking into account Proposition \ref{kernels} we can write
\begin{align*}
\mathcal{T}_m^{*,L_\alpha}(f)(x)\leq&\int_{(0,\infty)\setminus(\frac{x}{2},2x)}|K_\phi ^{L_\alpha} (x,y)||f(y)|dy+\int_{\frac{x}{2}}^{2x}|K_\phi ^{L_\alpha} (x,y)-K_\phi ^H(x,y)| |f(y)|dy\\
&+\sup_{\varepsilon >0}\left|\int_{\frac{x}{2},|x-y|>\varepsilon}^{2x} K_\phi^H(x,y) f(y)dy\right|\\
\leq &C\left(H_0^{\alpha +1/2}(|f|)(x)+H_\infty ^{\alpha
+1/2}(|f|)(x)+N(f)(x)+\mathcal{T}_{m ,{\rm
loc}}^{*,H}(f)(x)\right),\quad x\in (0,\infty ),
\end{align*}
where, for $\eta >-1$,
\begin{equation}\label{H0}
H_0^\eta (f)(x)=\frac{1}{x^{\eta +1}}\int_0^xy^\eta f(y)dy,\quad x\in (0,\infty ),
\end{equation}
\begin{equation}\label{Hinfty}
H_\infty ^\eta (f)(x)=x^\eta \int_x^\infty \frac{f(y)}{y^{\eta +1}}dy,\quad x\in (0,\infty ),
\end{equation}
\begin{equation}\label{N}
N(f)(x)=\int_{\frac{x}{2}}^{2x}\frac{1}{y}\left(1+\sqrt{\frac{x}{|x-y|}}\right)
|f(y)| dy,\quad x\in (0,\infty ),
\end{equation}
and
$$
\mathcal{T}_{m ,{\rm loc}}^{*,H}(f)(x)=\sup_{\varepsilon
>0}\left|\int_{\frac{x}{2},|x-y|>\varepsilon }^{2x}  K_\phi^H
(x,y)f(y)dy\right|,\quad x\in (0,\infty ).
$$

By using Jensen's inequality it can be seen that $N$ is a bounded
operator from $L^p(0,\infty )$ into itself, for every $1\leq p\leq
\infty$. Moreover, \cite[Lemmas 3.1 and 3.2]{ChH} show that the
Hardy type operators $H_0^{\alpha +1/2}$ and $H_\infty ^{\alpha
+1/2}$ are bounded from $L^p(0,\infty )$ into itself, when
$1<p<\infty $, and from $L^1(0,\infty )$ into $L^{1,\infty
}(0,\infty )$. Moreover, since $T_m^H$ is a Calder\'on-Zygmund
operator (see Proposition \ref{CZkernel}) the maximal operator
$\mathcal{T}_m^{*,H}$ defined by
$$
\mathcal{T}_m^{*,H}(g)(x)=\sup_{\varepsilon
>0}\left|\int_{|x-y|>\varepsilon }K_\phi^H
(x,y)g(y)dy\right|,\quad x\in \mathbb{R},
$$
is bounded from $L^p(\mathbb{R})$ into itself, for every
$1<p<\infty$, and from $L^1(\mathbb{R})$ into $L^{1,\infty
}(\mathbb{R})$. Then, according to Proposition \ref{kernels}, (a)
and (b), and using again \cite[Lemmas 3.1 and 3.2]{ChH}, we infer
that the operator $\mathcal{T}_{m,{\rm loc}}^{*,H}$ is bounded
from $L^p(0,\infty)$ into itself, for every $1<p<\infty$, and from
$L^1(0,\infty)$ into $L^{1,\infty }(0,\infty)$.

Thus we show the desired $L^p$-boundedness properties of the maximal operator $T_m^{*,L_\alpha}$.

By \eqref{FF} and since $C_c^\infty (0,\infty)$ is a dense subspace of $L^p(0,\infty)$, $1\leq p<\infty$, standard
arguments allow us to conclude that there exists the limit
$$
\mathbb{T}_m ^{L_\alpha} (f)(x)=\lim_{\varepsilon \rightarrow
0^+}\left(\Lambda (\varepsilon )f(x)+\int_{0,|x-y|>\varepsilon
}^\infty  K_\phi ^\alpha (x,y)f(y)dy\right),\quad \mbox{ a.e.
}x\in (0,\infty ),
$$
for $f\in L^p(0,\infty )$, $1\leq p<\infty$. Moreover,  the
operator $\mathbb{T}_m ^{L_\alpha}$ is bounded from
$L^p(0,\infty)$ into itself, for every $1<p<\infty$, and from
$L^1(0,\infty)$ into $L^{1,\infty }(0,\infty)$.

Then, $T_m^{L_\alpha}$ and $\mathbb{T}_m^{L_\alpha}$ are
$L^2$-bounded operators which coincides on $C_c^\infty (0,\infty
)$. Since this space is dense in $L^2(0,\infty )$, $T_m^{L_\alpha}
(f)=\mathbb{T}_m^{L_\alpha} (f)$, $f\in L^2(0,\infty)$.

Thus the proof is complete.

\subsection{Proof of Theorem \ref{UMD} for Laguerre operators}

Let $\alpha>-1/2$. The imaginary power $L_\alpha ^{i\gamma }$, $\gamma \in \mathbb{R}$, is the Laplace transform type multiplier for $L_\alpha$ associated to the function $m_\gamma (\lambda)=\lambda \int_0^\infty  e^{-\lambda t}\phi _\gamma (t)dt$, $\lambda >0$, where $\phi _\gamma (t)=(\Gamma (1-i\gamma ))^{-1}t^{-i\gamma }$, $t>0$.

Let $\B$ be a Banach space and $\gamma \in \mathbb{R}$.  We define $L^{i\gamma }_\alpha=T_{m_\gamma }^{L_\alpha} $ on $C_c^\infty (0,\infty )\otimes \B$ in the natural way. Also the imaginary power $H^{i\gamma}=T_{m_\gamma }^H$ of the Hermite operator $H$ is defined on $C_c^\infty (\mathbb{R})\otimes \B$.

We split the operator $T_{m_\gamma }^{L_\alpha}$ as follows:
\begin{equation}\label{split}
T_{m_\gamma }^{L_\alpha} (f)=T_{m_\gamma, {\rm
glob}}^{L_\alpha}(f)+D_{m_\gamma}^\alpha
(f)+T_{m_\gamma}^H(\tilde{f})-T_{m_\gamma, {\rm glob}}^H(f),
\end{equation}
where $f\in C_c^\infty(0,\infty )\otimes \B$ and $\tilde{f}\in C_c^\infty (\mathbb{R})\otimes \B$ is the extension of $f$ as given in Section 3 but with the natural Banach-valued sense, that is, if $f=\sum_{i=1}^nb_if_i$, $b_i\in \B$ and $f_i\in C_c^\infty (0,\infty )$, $i=1,...,n$, then $\tilde{f}=\sum_{i=1}^nb_i\tilde{f_i}$, where $\tilde{f_i}(x)=f_i(x)$, $x>0$, and $\tilde{f_i}(x)=0$, $x\leq 0$. The operators $T_{m_\gamma, {\rm glob}}^{L_\alpha}$, $D_{m_\gamma}^\alpha$ and $T_{m_\gamma, {\rm glob}}^H$ are defined on $C_c^\infty (0,\infty )\otimes \B$ in the following way:
$$
T_{m_\gamma, {\rm glob}}^{L_\alpha}(f)(x)=T_{m_\gamma }^{L_\alpha} (f\chi _{(0,\frac{x}{2})\cup (2x,\infty )})(x), \quad x\in (0,\infty ),$$
$$ D_{m_\gamma}^\alpha (f)(x)=\int_{\frac{x}{2}}^{2x}[K_{\phi _\gamma}^\alpha (x,y)-K_{\phi _\gamma }^H(x,y)]f(y)dy,\quad x\in (0,\infty ), $$
and
$$T_{m_\gamma, {\rm glob}}^H(f)(x)=T_{m_\gamma }^H(f\chi _{(0,\frac{x}{2})\cup (2x,\infty )})(x),\quad x\in (0,\infty ).
$$

These three operators can be extended to $L_\B^p(0,\infty )$, $1<p<\infty$, as bounded operators from $L_\B^p(0,\infty )$ into itself. Indeed, by using Proposition \ref{kernels} we can write, for every $f\in C_c^\infty (0,\infty )\otimes \B$,
$$
\|T_{m_\gamma, {\rm glob}}^{L_\alpha}(f)(x)\|_\B\leq C(H_0^{\alpha +1/2}(\|f\|_\B)(x)+H_\infty ^{\alpha +1/2}(\|f\|_\B)(x)), \quad x\in (0,\infty ),
$$
$$
\|D_{m_\gamma}^\alpha (f)(x)\|_\B\leq CN(\|f\|_\B)(x), \quad x\in (0,\infty ),
$$
and
$$
\|T_{m_\gamma, {\rm glob}}^{H}(f)(x)\|_\B\leq C(H_0^0(\|f\|_\B)(x)+H_\infty ^0(\|f\|_\B)(x)), \quad x\in (0,\infty ),
$$
where $H_0^\eta$, $H_\infty ^\eta$, $\eta =\alpha +1/2$ or $\eta
=0$, and $N$ are the operators given in \eqref{H0}, \eqref{Hinfty}
and \eqref{N}, respectively.

Then, by using Jensen's inequality and \cite[Lemmas 3.1 and
3.2]{ChH} we obtain that the above operators can be extended
boundedly to $L_\B^p(0,\infty)$, for every $1<p<\infty$.

 Theorem \ref{UMD} for the Hermite operators and \eqref{split} allow us to show that if $\B$ is UMD then, $T_{m_\gamma }^{L_\alpha} $ can be extended to $L_\B^p(0,\infty)$, $1<p<\infty $, as a bounded operator from $L_\B^p(0,\infty)$ into itself.

Assume now that, for some $1<p<\infty$, the operator $T_{m_\gamma
}^{L_\alpha} $ is extended boundedly to $L_\B^p(0,\infty)$. Then,
according to \eqref{split}, there exists $C>0$ such that
$$
\|T_{m_\gamma }^H(\tilde{f})\|_{L_\B^p(0,\infty)}\leq
C\|f\|_{L_\B^p(0,\infty )},\quad f\in L^2( 0,\infty)\otimes \B.
$$

Consider now $f\in L^2(\mathbb{R})\otimes \B$ and decompose $f$ as $f=f_1+f_2$, where $f_1=f\chi _{(0,\infty )}$. Thus, we have that
$$
\|T_{m_\gamma}^H(f)\|_{L_\B^p(\mathbb{R})}\leq
\sum_{i=1}^2\left(\|\chi _{(0,\infty )}T_{m_\gamma
}^H(f_i)\|_{L_\B^p(\mathbb{R})}+\|\chi _{(-\infty ,0]}T_{m_\gamma
}^H(f_i)\|_{L_\B^p(\mathbb{R})}\right).
$$

It is clear that
\begin{equation}\label{f1+}
\|\chi _{(0,\infty )}T_{m_\gamma
}^H(f_1)\|_{L_\B^p(\mathbb{R})}=\|T_{m_\gamma
}^H(\tilde{f}_{|(0,\infty)})\|_{L_\B^p(0,\infty)}\leq
C\|\tilde{f}_{|(0,\infty)}\|_{L_\B^p(0,\infty)}\leq
C\|f\|_{L_\B^p(\mathbb{R})}.
\end{equation}

Also, we have that
$$
T_{m_\gamma }^H(f_1)(-x)=\int_0^\infty K_{\phi
_\gamma}^H(-x,y)f(y)dy,\quad \mbox{ a.e. }x\in (0,\infty ).
$$
Then, Proposition \ref{kernels}(b) leads to
$$
\|T_{m_\gamma }^H(f_1)(-x)\|_\B\leq C\int_0^\infty \frac{\|f(y)\|_\B}{y+x}dy\leq C(H_0^0(\|f\|_\B)(x)+H_\infty ^0(\|f\|_\B)(x)),\quad \mbox{ a.e. }x\in (0,\infty ),
$$
which allows us to obtain, by using \cite[Lemmas 3.1 and 3.2]{ChH}, that
\begin{equation}\label{f1-}
\|\chi _{(-\infty ,0]}T_{m_\gamma
}^H(f_1)\|_{L_\B^p(\mathbb{R})}=\|T_{m_\gamma
}^H(f_1)(-\cdot)\|_{L_\B^p((0,\infty), dx)}\leq
C\|f\|_{L_\B^p(0,\infty)}\leq C\|f\|_{L_\B^p(\mathbb{R})}.
\end{equation}
On the other hand, since $K_{\phi _\gamma }^H(-x,-y)=K_{\phi _\gamma }^H(x,y)$, $x,y\in \mathbb{R}$, $x\not=y$, from Theorem 1.1 for the Hermite operator we deduce that
$$
T_{m_\gamma }^H(f_2)(-x)=\lim_{\varepsilon \rightarrow
0^+}\left(\Lambda (\varepsilon )f(-x) +\int_{0,|y-x|>\varepsilon
}^\infty K_{\phi _\gamma }^H(x,y)f(-y)dy\right)=T_{m_\gamma
}^H(\tilde{g})(x),\quad \mbox{ a.e. }x\in (0,\infty ),
$$
for a certain $\Lambda\in L^\infty(0,\infty)$, where $g(y)=f(-y)$, $y\in (0,\infty )$.
Thus,
\begin{equation}\label{f2-}
\|\chi _{(-\infty ,0]}T_{m_\gamma
}^H(f_2)\|_{L_\B^p(\mathbb{R})}=\|T_{m_\gamma
}^H(f_2)(-\cdot)\|_{L_\B^p(0,\infty)}=\|T_{m_\gamma
}^H(\tilde{g})\|_{L_\B^p(0,\infty)}\leq C\|f\|_{L_\B^p(\mathbb{R})}.
\end{equation}

Finally, since $g(y)=f(-y)$, $y\in (0,\infty )$, we can see that
$$
T_{m_\gamma }^H(f_2)(x)=\int_0^\infty K_{\phi _\gamma
}^H(x,-y)g(y)dy,\quad \mbox{ a.e. }x\in (0,\infty ),
$$
and, again by Proposition \ref{kernels}(b), that
$$
\|T_{m_\gamma }^H(f_2)(x)\|_\B\leq C(H_0^0(\|g\|_\B)(x)+H_\infty ^0(\|g\|_\B)(x)),\quad \mbox{ a.e. }x\in (0,\infty ).
$$
We obtain that
\begin{eqnarray}\label{f2+}
\|\chi _{(0,\infty )}T_{m_\gamma }^H(f_2)\|_{L_\B^p(\mathbb{R})}&=&\|T_{m_\gamma }^H(f_2)\|_{L_\B^p(0,\infty)}\nonumber\\&\leq &C(\|H_0^0(\|g\|_\B)\|_{L^p(0,\infty)}+\|H_\infty ^0(\|g\|_\B)\|_{L^p(0,\infty)})\nonumber\\
&\leq &C\|g\|_{L_\B^p(0,\infty )}\leq C\|f\|_{L_\B^p(\mathbb{R})}.
\end{eqnarray}

Estimations \eqref{f1+}-\eqref{f2+} lead to
$$
\|T_{m_\gamma}^H(f)\|_{L_\B^p(\mathbb{R})}\leq
C\|f\|_{L_\B^p(\mathbb{R})}.
$$
The proof finishes by using Theorem \ref{UMD} for the Hermite operator.



\end{document}